\numberwithin{equation}{section}
\theoremstyle{plain}
\newtheorem{theorem}{Theorem}[section]
\newtheorem{lemma}[theorem]{Lemma}
\newtheorem{proposition}[theorem]{Proposition}
\theoremstyle{definition}
\newtheorem{definition}[theorem]{Definition}
\newtheorem{remark}[theorem]{Remark}
\newtheorem{open.problem}[theorem]{Open Problem}
\newcommand{\N}{\mathbb{N}}
\newcommand{\R}{\mathbb{R}}
\title[Minimization of the buckling load with perimeter constraint]{Minimization of the buckling load of a clamped plate with perimeter constraint}
\author[M.\ Carriero]{Michele Carriero}
\address{Dipartimento di Matematica
	e Fisica ``E. De Giorgi'', Universit\`a del Salento,
	Via Per Arnesano, 73100 Lecce, Italy.}
\email{michele.carriero@unisalento.it}
\author[S.\ Cito]{Simone Cito}
\address{Dipartimento di Matematica
	e Fisica ``E. De Giorgi'', Universit\`a del Salento,
	Via Per Arnesano, 73100 Lecce, Italy.}
\email{simone.cito@unisalento.it}
\author[A.\ Leaci]{Antonio Leaci}
\address{Dipartimento di Matematica
	e Fisica ``E. De Giorgi'', Universit\`a del Salento,
	Via Per Arnesano, 73100 Lecce, Italy.}
\email{antonio.leaci@unisalento.it}
\date{\today}  \linespread{1.2}
\keywords{Shape optimization, buckling eigenvalues, bilaplacian operator, perimeter constraint}
\subjclass[2010]{49Q10}
\begin{document}
\begin{abstract}
		We look for minimizers of the buckling load problem with perimeter constraint in any dimension. In dimension 2, we show that the minimizing plates are convex; in higher dimension, by passing through a weaker formulation of the problem, we show that any optimal set is open and connected. For higher eigenvalues, we prove that minimizers exist among convex sets with prescribed perimeter.
\end{abstract}
	
	\maketitle
	
	\tableofcontents
	
	\section{Introduction}
	\label{sec:intro}
	Let $d\in\N$, $d\ge 2$ and let $\Omega\subset\R^d$ a bounded Lipschitz domain. We say that $\Lambda(\Omega)$ is an eigenvalue of the buckling load problem (briefly, a buckling eigenvalue) if there exists $u\in H^2_0(\Omega)\setminus\left\{0\right\}$ such that
	\begin{equation}\label{eq:buckpde}
	\begin{cases}
	-\Delta^2 u=\Lambda(\Omega)\Delta u &\text{in $\Omega$},\\
	u=\frac{\partial u}{\partial \nu}=0 &\text{on $\partial\Omega$}.
	\end{cases}
	\end{equation}
The buckling eigenvalues form an increasing sequence
$$0<\Lambda_1(\Omega)\le\Lambda_2(\Omega)\le\ldots\nearrow+\infty$$
and, for any $h\in\N$, they can be characterized variationally by the min-max formula involving the Rayleigh quotient
	$$\Lambda_h(\Omega)=\min_{V\subset H_0^2(\Omega),\dim V=h}\max_{u\in V\setminus\left\{0\right\}}\frac{\displaystyle\int_\Omega(\Delta u)^2\:dx}{\displaystyle\int_\Omega |\nabla u|^2\:dx}.
$$
In this paper we mainly focus on the first eigenvalue, i.e.
	\begin{equation}\label{eq:buckvar}
	\Lambda_1(\Omega)=\min_{u\in H_0^2(\Omega)\setminus\left\{0\right\}}\frac{\displaystyle\int_\Omega(\Delta u)^2\:dx}{\displaystyle\int_\Omega |\nabla u|^2\:dx}.
	\end{equation}
The minimum above is achieved only on the solutions of Problem \eqref{eq:buckpde}.

Our aim is to show an existence result for a shape optimization problem involving $\Lambda_1(\Omega)$, whose formulation is appropriated in view of the physical interpretation of the PDE. Indeed, in a $2$-dimensional setting, $\Omega$ can be thought as a thin elastic plate that is clamped along its boundary $\partial\Omega$ and subject to compressive forces (the so called ``buckling forces'') across $\partial\Omega$; these forces lie on the same plane as $\Omega$, that may deflect out of its plane when these forces reach a
certain magnitude. $\Lambda_1(\Omega)$ is said the ``buckling load'' of $\Omega$ and can be interpreted as the energy associated to the plate $\Omega$ in this phenomenon.\\

\begin{figure}[h]
	\centering
		\includegraphics[width=0.7\textwidth]{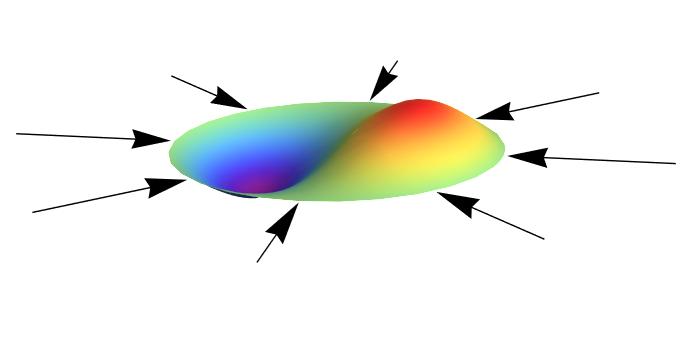}
		\caption{A clamped plate subject to the buckling forces in dimension $d=2$; the deflection can be both upward and downward.}
\end{figure}

There are some works in literature treating this problem by constraining the volume of the admissible sets and there is a few information about the minimizers. The first result that uses variational methods is \cite{AshBuc2003}, where the authors prove that the problem in $\R^2$ admits a quasi open minimizer without prescribing any bounded design region; moreover, supposing that a minimizer $\Omega$ is a sufficiently smooth domain with sufficiently smooth eigenfunctions, the authors show that $\Omega$ has to be a disk of maximal area. Several years later, in \cite{sto16} it is proved an existence result in dimension 2 and 3 with a different technique based on the eigenfunctions, but introducing a (big) bounded design region to assure extra compactness. 
Recently, \cite{sto2021} provided an existence result for minimizers in the whole of $\R^d$ based on a mixed strategy: a concentration-compactness argument inspired by \cite{AshBuc2003} to get a limit function $u$ and a regularity argument for $u$ to build the optimal open set using the superlevel sets of continuous functions; moreover, the author shows that the open minimizers are also connected, but nothing is proved about the regularity of the boundary. On the other hand, it seems that there are no available results in literature for higher eigenvalues, even if the admissible shapes satisfy some topological constraint.

Nevertheless, in view of the physical interpretation of the buckling eigenvalues, it seems reasonable to replace the volume constraint with the perimeter constraint; in this work, given $p>0$, we focus on the following problem
\begin{equation}\label{eq:minbuck}
\min\left\{\Lambda_1(\Omega):\Omega\subset\R^d\ \text{open},\ |\Omega|<\infty,\ P(\Omega)\le p\right\}.
\end{equation}
An appropriate interpretation in $\R^2$ can be the following: given a deformable support (with prescribed length $p$) wherein the admissible plates can be clamped, we want to find if there exists a plate that minimizes the buckling load due to the buckling forces acting across the support itself; in other words, we look for the optimal plate that can be clamped in the given support. The main result of the paper is the following.

	\begin{theorem}\label{teo:maintheorem}
	Problem \eqref{eq:minbuck} admits a solution. Every minimizer is an open connected set. In $\R^2$ any optimal set is open, bounded and convex.
	\end{theorem}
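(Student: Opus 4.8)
\emph{Notation and a priori bounds.} Write $m(p)$ for the infimum in \eqref{eq:minbuck}. By the isoperimetric inequality every admissible $\Omega$ satisfies $|\Omega|\le c_d\,p^{d/(d-1)}$, and a Gagliardo--Nirenberg/Poincar\'e inequality applied to $\nabla u$ gives, for $u\in H^2_0(\Omega)$, the bound $\|\nabla u\|_{L^2}^2\le C(d)\,|\Omega|^{2/d}\|\Delta u\|_{L^2}^2$ (with exponent $1$ instead of $2/d$ when $d=2$); hence $0<m(p)<+\infty$, and from the scalings $P(t\Omega)=t^{d-1}P(\Omega)$, $\Lambda_1(t\Omega)=t^{-2}\Lambda_1(\Omega)$ one obtains $m(p)=m(1)\,p^{-2/(d-1)}$, so $p\mapsto m(p)$ is continuous and \emph{strictly decreasing}. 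The plan is: (i) solve a relaxed version of \eqref{eq:minbuck}; (ii) prove by regularity that a relaxed minimiser is (equivalent to) an open set, hence solves \eqref{eq:minbuck}; (iii) deduce connectedness; (iv) in the plane, deduce convexity and boundedness.

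\emph{Step 1: the relaxed problem and its solvability.} For a set $A$ of finite perimeter and finite measure set $\widetilde H^2_0(A):=\{u\in H^2(\R^d):u=0 \text{ a.e.\ in }\R^d\setminus A\}$ (so also $\nabla u=0$ a.e.\ outside $A$), $\widetilde\Lambda_1(A):=\inf\{R(u):u\in\widetilde H^2_0(A)\setminus\{0\}\}$ with $R(u):=\|\Delta u\|_{L^2}^2/\|\nabla u\|_{L^2}^2$, and $\widetilde m(p):=\inf\{\widetilde\Lambda_1(A):|A|<\infty,\ P(A)\le p\}$; for an open set with negligible boundary $\widetilde\Lambda_1=\Lambda_1$, so $\widetilde m(p)\le m(p)$, and the same scalings make $\widetilde m$ strictly decreasing. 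Take a minimising sequence $(A_n,u_n)$ with $\|\nabla u_n\|_{L^2}=1$ and $\|\Delta u_n\|_{L^2}^2=\widetilde\Lambda_1(A_n)\to\widetilde m(p)$; discarding the components of $A_n$ disjoint from $\{u_n\neq0\}$ does not change $R(u_n)$ and does not increase $P(A_n)$, and the volume bound $|A_n|\le c_d p^{d/(d-1)}$ then bounds $\{u_n\}$ in $H^2(\R^d)$ and $\{\chi_{A_n}\}$ in $BV(\R^d)$. Apply the concentration--compactness principle to $|\nabla u_n|^2\,dx$. \emph{Vanishing} is impossible: on a set of volume $\le c_d p^{d/(d-1)}$, H\"older's inequality and the uniform $L^q$ bound for $\nabla u_n$ ($2<q<2^\ast$, from $\nabla u_n$ bounded in $H^1(\R^d)$) would force $1=\|\nabla u_n\|_{L^2}^2\to0$. \emph{Dichotomy} is excluded as follows: write $u_n=\phi_nu_n+(1-\phi_n)u_n=:v_n+w_n$ for a cut-off $\phi_n$ across a wide annular neck of vanishing gradient mass; the cut-off corrections are $o(1)$ in $H^2$, so $\|\Delta v_n\|_{L^2}^2+\|\Delta w_n\|_{L^2}^2\le\|\Delta u_n\|_{L^2}^2+o(1)=\widetilde m(p)+o(1)$ (using $\phi_n^2+(1-\phi_n)^2\le1$), while $\|\nabla v_n\|_{L^2}^2\to\theta$, $\|\nabla w_n\|_{L^2}^2\to1-\theta$ with $\theta\in(0,1)$ and $\|\nabla v_n\|_{L^2}^2+\|\nabla w_n\|_{L^2}^2=1+o(1)$; hence $\min\{R(v_n),R(w_n)\}\le\widetilde m(p)+o(1)$, say $R(v_n)\le\widetilde m(p)+o(1)$. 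Now $v_n\in\widetilde H^2_0(A_n\cap B_{\rho_n})$, and since the escaping mass forces $|A_n\setminus B_{\rho_n}|\gtrsim(1-\theta)^{d/2}$ by the Sobolev inequality, choosing the cutting radius so that the spherical slice of $A_n$ is small gives $P(A_n\cap B_{\rho_n})\le p-\delta_0+o(1)$ with $\delta_0=\delta_0(\theta)>0$ fixed; therefore $\widetilde m(p-\delta_0/2)\le\widetilde\Lambda_1(A_n\cap B_{\rho_n})\le R(v_n)\le\widetilde m(p)+o(1)$, contradicting the strict monotonicity of $\widetilde m$. So compactness holds: after translation, $u_n\to u$ strongly in $H^1(\R^d)$, $u\neq0$, $\|\nabla u\|_{L^2}=1$, $\|\Delta u\|_{L^2}^2\le\widetilde m(p)$ by weak lower semicontinuity, while $\chi_{A_n}\to\chi_{\Omega^\ast}$ in $L^1(\R^d)$ with $P(\Omega^\ast)\le p$ and $u=0$ a.e.\ in $\R^d\setminus\Omega^\ast$; hence $(\Omega^\ast,u)$ minimises the relaxed problem.

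\emph{Step 2: openness and solvability of \eqref{eq:minbuck}.} From the optimality of the pair $(\Omega^\ast,u)$ one derives that $u$ solves a fourth order elliptic problem with datum concentrated on $\partial^\ast\Omega^\ast$; bootstrapping (Calder\'on--Zygmund estimates for $\Delta^2$ and Sobolev embeddings) gives $u\in C^{1,\alpha}_{\mathrm{loc}}(\R^d)$, and the optimality of $\Omega^\ast$, together with the fact that deformations of $\partial\Omega^\ast$ change $\widetilde\Lambda_1$ in a controlled way through $u$ while the constraint is restored by the scaling above, shows that $\Omega^\ast$ is a quasi-minimiser of the perimeter; standard density and regularity estimates then provide a representative of $\Omega^\ast$ that is open (with $\partial\Omega^\ast$ smooth away from a small singular set, and no singular set at all when $d=2$). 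For this open representative $\widetilde\Lambda_1(\Omega^\ast)=\Lambda_1(\Omega^\ast)$, so $\Lambda_1(\Omega^\ast)=\widetilde m(p)\le m(p)\le\Lambda_1(\Omega^\ast)$, whence $m(p)=\widetilde m(p)$ and $\Omega^\ast$ solves \eqref{eq:minbuck}. The same reasoning shows any minimiser of \eqref{eq:minbuck} is open. \emph{Connectedness:} if a minimiser $\Omega$ were disconnected, pick a component $\Omega_1$ with $\Lambda_1(\Omega_1)=\min_i\Lambda_1(\Omega_i)=\Lambda_1(\Omega)$ (attained, since $\Omega$ has finite measure); then $P(\Omega_1)<P(\Omega)\le p$, and rescaling, $t\Omega_1$ with $t:=(p/P(\Omega_1))^{1/(d-1)}>1$ is admissible with $\Lambda_1(t\Omega_1)=t^{-2}\Lambda_1(\Omega)<m(p)$, a contradiction.

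\emph{Step 3: the planar case.} Let $d=2$ and let $\Omega$ be a minimiser, which by Step 2 is open, connected, with $\partial\Omega$ a $C^{1,\alpha}$ curve. Let $\widehat\Omega$ be the interior of its convex hull. Then $\Omega\subset\widehat\Omega$, so $\Lambda_1(\widehat\Omega)\le\Lambda_1(\Omega)$ by domain monotonicity, and $P(\widehat\Omega)\le P(\Omega)$, since in the plane $\partial\widehat\Omega$ is obtained from $\partial\Omega$ by replacing its concave arcs with strictly shorter chords; this inequality is strict as soon as $|\widehat\Omega\setminus\Omega|>0$, and for an open set with $C^{1,\alpha}$ boundary this happens precisely when $\Omega$ is not convex. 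If $\Omega$ were not convex we would then have $P(\widehat\Omega)<P(\Omega)\le p$, and rescaling $\widehat\Omega$ up to perimeter $p$ would produce a strictly smaller value of $\Lambda_1$, contradicting minimality; hence $\Omega$ is convex, and being a convex planar set of finite perimeter it is bounded.

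\emph{Main obstacle.} The crux is Steps 1--2: excluding the dichotomy alternative (which requires feeding the isoperimetric lower bound for the perimeters of the two split pieces into the energy-splitting inequality) and, above all, developing the regularity theory for the bilaplacian needed to turn the relaxed minimiser into a genuine open set whose boundary is a perimeter quasi-minimiser; by contrast, the connectedness and planar convexity arguments are soft geometric consequences of domain monotonicity and the scaling law.
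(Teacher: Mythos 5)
Your Step 1 (relaxation, concentration–compactness) is in the same spirit as the paper's; the vanishing argument via the Lions lemma applied to $\nabla u_n$ is in fact somewhat more direct than the argument in the paper (which invokes a version of a lemma from Bucur--Varchon), and your way of ruling out dichotomy by tracking the perimeter lost in the cut and invoking strict monotonicity of $\widetilde m(p)$ is essentially equivalent to the paper's rescaling argument with $\limsup t_k^2=\delta>1$.

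The gap is in Step 2. You claim that bootstrapping Calder\'on--Zygmund estimates for $\Delta^2$ gives $u\in C^{1,\alpha}_{\mathrm{loc}}(\R^d)$, and that $\Omega^\ast$ is a \emph{quasi-minimiser of the perimeter}, hence open with boundary smooth off a small singular set. Neither of these is justified by the optimality of $(\Omega^\ast,u)$: inside $\Omega^\ast$ the Euler--Lagrange equation is nice, but across the free boundary there is no elliptic system you can bootstrap, and quasi-minimality would require a two-sided estimate on how $\widetilde\Lambda_1$ responds to \emph{shrinking} the domain (an inward perturbation estimate), which is not at all clear for this fourth-order functional. The paper explicitly lists both claims as open problems (Section~\ref{sec:open}): it is not known whether minimizers are bounded, whether they are perimeter quasi-minimizers, or whether $\partial\Omega^\ast$ is $C^{1,\alpha}$. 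What the paper actually uses is much weaker and does follow immediately from the penalized formulation: every minimizer is a \emph{perimeter supersolution} (enlarging $\Omega$ cannot decrease $\tilde\Lambda_1$, so it must not decrease $P$), perimeter supersolutions satisfy an \emph{exterior} density estimate (\cite[Lemma 4.4]{deve}), the exterior density estimate alone forces the set of density-one points $\Omega_1$ to be open, and then $\tilde H^2_0(\Omega)=H^2_0(\Omega_1)$ is established via Propositions~\ref{teo:evans4.4} and \ref{teo:adams9.1.3}. This gives openness (and hence the identification with Problem~\eqref{eq:minbuck}) without any claim of boundary regularity. You should replace Step 2 by this argument; the way you have written it amounts to asserting an open problem.

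A consequential issue: your Step 3 leans on the $C^{1,\alpha}$ boundary from Step 2 to argue $P(\widehat\Omega)<P(\Omega)$ strictly. This is unnecessary and can be sidestepped. In $\R^2$ one always has $P(\widehat\Omega)\le P(\Omega)$ for the convex hull, so $\widehat\Omega$ is admissible and $\Lambda_1(\widehat\Omega)\le\Lambda_1(\Omega)$; if $\Omega$ is open and not convex then $|\widehat\Omega\setminus\Omega|>0$, and the inclusion is \emph{strictly} eigenvalue-decreasing by unique continuation (an eigenfunction of $\widehat\Omega$ is real analytic and cannot vanish on an open subset), a contradiction. Alternatively, the paper's planar proof (Proposition~\ref{pro:exis2}) avoids the issue altogether by running the direct method from the outset in the class of convex sets contained in a fixed design region (using the diameter bound $\mathrm{diam}(\Omega)<p/2$ and Hausdorff compactness of convex sets), with non-degeneracy via Payne's inequality $\Lambda_1\ge\lambda_2$. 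Either route is fine; tying convexity to the unproven boundary regularity is not.
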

	
We also prove the following existence result for higher eigenvalues in the framework of convex sets.
\begin{theorem}\label{pro:kconv}
Problem \begin{equation}\label{eq:kconv}
\min\left\{\Lambda_h(\Omega):\Omega\subset\R^d\ \text{open and convex}, \mathcal{H}^{d-1}(\partial\Omega)\le p\right\},
\end{equation}
admits a solution.
\end{theorem}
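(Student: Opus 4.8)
The plan is to run the direct method on a minimizing sequence; the one genuine difficulty is to rule out a minimizing sequence of convex competitors spreading into a long, thin shape that in the limit degenerates to a lower-dimensional set. Denote by $m$ the infimum in \eqref{eq:kconv}. It is finite, since any ball $B$ with $\mathcal H^{d-1}(\partial B)\le p$ is an admissible competitor and $\Lambda_h(U)<\infty$ for every nonempty bounded open convex $U$ (the space $H^2_0(U)$ being infinite-dimensional, the min-max formula returns a finite value). Fix a minimizing sequence $(\Omega_n)$ of open convex sets with $\mathcal H^{d-1}(\partial\Omega_n)\le p$ and $\Lambda_h(\Omega_n)\to m$, so that $\Lambda_h(\Omega_n)\le m+1$ for $n$ large.

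First I would establish geometric a priori bounds on $(\Omega_n)$. For $u\in H^2_0(U)$, integration by parts and the Cauchy--Schwarz inequality give $\int_U|\nabla u|^2\,dx=-\int_U u\,\Delta u\,dx\le\|u\|_{L^2(U)}\|\Delta u\|_{L^2(U)}$, which combined with the Poincar\'e inequality $\|u\|_{L^2(U)}\le\lambda_1(U)^{-1/2}\|\nabla u\|_{L^2(U)}$ yields $\Lambda_1(U)\ge\lambda_1(U)$, and hence $\Lambda_h(U)\ge\lambda_1(U)$, where $\lambda_1$ denotes the first Dirichlet eigenvalue of $-\Delta$. Since a convex set lies in a slab of thickness equal to its minimal width $w(U)$, domain monotonicity of $\lambda_1$ gives $\lambda_1(U)\ge\pi^2/w(U)^2$. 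Therefore $\Lambda_h(\Omega_n)\le m+1$ forces a uniform lower bound $w(\Omega_n)\ge w_0>0$, and then Steinhagen's inequality gives a uniform lower bound $r(\Omega_n)\ge r_0>0$ on the inradius. To control the circumradius I would invoke John's theorem to enclose each $\Omega_n$ between concentric ellipsoids $E_n\subseteq\Omega_n\subseteq d\,E_n$: if $a_1^{(n)}\ge\cdots\ge a_d^{(n)}$ are the semiaxes of $E_n$, then monotonicity and homogeneity of the surface area on convex bodies give $\mathcal H^{d-1}(\partial\Omega_n)\asymp_d a_1^{(n)}\cdots a_{d-1}^{(n)}$, together with $R(\Omega_n)\asymp_d a_1^{(n)}$ and $w(\Omega_n)\asymp_d a_d^{(n)}$. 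Were $R(\Omega_n)\to\infty$ along a subsequence, we would have $a_1^{(n)}\to\infty$, and the bound $a_1^{(n)}\cdots a_{d-1}^{(n)}\le C_d\,p$ would force $a_{d-1}^{(n)}\to0$ when $d\ge3$ (for $d=2$ the perimeter bounds the diameter directly), hence $w(\Omega_n)\to0$, contradicting $w(\Omega_n)\ge w_0$. Thus $\sup_n R(\Omega_n)\le R_0<\infty$.

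With these bounds in hand, after a translation we may assume $B_{r_0}(0)\subseteq\Omega_n\subseteq B_{R_0}(0)$ for all $n$. By the Blaschke selection theorem a subsequence of $(\overline{\Omega_n})$ converges in the Hausdorff distance to a compact convex set $K$ with $B_{r_0}(0)\subseteq K\subseteq B_{R_0}(0)$; hence $\Omega:=\operatorname{int}(K)$ is a nonempty open convex set, and by continuity of the surface-area functional on convex bodies under Hausdorff convergence, $\mathcal H^{d-1}(\partial\Omega)=\lim_n\mathcal H^{d-1}(\partial\Omega_n)\le p$, so $\Omega$ is admissible and $\Lambda_h(\Omega)\ge m$. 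For the reverse inequality I would use the standard dilation trick: with $\delta_n:=d_H(\overline{\Omega_n},K)\to0$, the inclusion $B_{r_0}(0)\subseteq K$ and convexity give $t_n\overline{\Omega_n}\subseteq K$ for $t_n:=r_0/(r_0+\delta_n)\to1$, hence $t_n\Omega_n\subseteq\Omega$; domain monotonicity together with the scaling identity $\Lambda_h(tU)=t^{-2}\Lambda_h(U)$ then yields $\Lambda_h(\Omega)\le\Lambda_h(t_n\Omega_n)=t_n^{-2}\Lambda_h(\Omega_n)\to m$. Therefore $\Lambda_h(\Omega)=m$ and $\Omega$ solves \eqref{eq:kconv}.

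I expect the main obstacle to be the non-degeneracy step of the second paragraph: the volume-based argument available for the Euclidean ball does not apply under a perimeter constraint, so one really must combine the lower bound $\Lambda_h\ge\lambda_1\ge\pi^2 w^{-2}$ with the convex-geometric fact that a perimeter bound turns a large circumradius into a small width, thereby forbidding escape of mass to infinity. The passage to higher eigenvalues is then automatic, since the whole scheme uses only $\Lambda_h\ge\Lambda_1$, the monotonicity and scaling of $\Lambda_h$, and its stability along Hausdorff-convergent sequences of uniformly fat convex sets, none of which is sensitive to $h$.
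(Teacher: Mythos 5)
Your argument is correct and follows the same skeleton as the paper's proof (direct method, non-degeneracy of the minimizing sequence, Hausdorff compactness of convex bodies), but the two technical pillars are handled by genuinely different means. For non-degeneracy and boundedness, the paper uses the Payne inequality $\Lambda_1\ge\lambda_2$ combined with John's ellipsoid (Theorem \ref{teo:john}) to show the minimal width cannot vanish, and then a bounding-box volume argument to exclude unbounded diameters; you instead derive the weaker but sufficient bound $\Lambda_h\ge\Lambda_1\ge\lambda_1\ge\pi^2 w^{-2}$ directly from Cauchy--Schwarz and Poincar\'e, obtain the inradius bound via Steinhagen, and control the circumradius through John's ellipsoid and the perimeter constraint itself rather than the volume; both routes are sound, and yours has the advantage of not invoking Payne's inequality. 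The more substantive divergence is in passing to the limit: the paper proves and applies a lower semicontinuity result for $\Lambda_h$ along Hausdorff-convergent sequences (Proposition \ref{pro:lsc}), which requires extracting weakly convergent bases of eigenfunctions in $H^2_0(D)$, whereas you exploit the uniform interior ball $B_{r_0}(0)\subseteq\Omega_n$ to get $t_n\Omega_n\subseteq\Omega$ with $t_n\to1$ and conclude by monotonicity and the scaling law $\Lambda_h(tU)=t^{-2}\Lambda_h(U)$. Your dilation argument is more elementary and entirely bypasses the functional-analytic semicontinuity proposition, at the price of relying on the uniform fatness of the sequence (which you have established); the paper's semicontinuity lemma is more robust in that it would survive in settings where no uniform interior ball is available. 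Both proofs are complete.
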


We finally point out that spectral shape optimization problems governed by higher order PDEs are, in general, harder to handle than the second order counterparts and non existence results often appear. To get acquainted on the non existence of optimal shapes for higher order problems, see several results in \cite{GGS} (and the reference therein). We just mention, for instance, \cite[Theorem 3.24]{GGS}, which gives an example of non existence of optimal shapes for a spectral functional neither with volume nor with perimeter constraint among convex sets.
	
The paper is structured as follows. In Section \ref{sec:prel} we give some preliminary results and we fix the notation. In Section \ref{sec:d=2} we treat separately the existence case in dimension 2 (it is almost straightforward, but we insert the proof for the reader's convenience). In Section \ref{sec:d>2} we deal with the existence in any dimension in a weak framework and we show that the optimal shapes for the weak problem are in fact minimizers for the original problem, completing the proof of Theorem \ref{teo:maintheorem}. In Section \ref{sec:kconv} we prove Theorem \ref{pro:kconv}. We finally set some open problems and perspectives in Section \ref{sec:open}.

\section{Notation and Preliminary Results}\label{sec:prel}

In this section we fix the notation and recall some results used throughout the paper.

For $x\in\mathbb{R}^d$ and $r>0$, $B_r(x)$ will denote the open ball of radius $r$ centered in $x$; when $x$ is omitted, we consider the ball centered in the origin. For every measurable set $E\subseteq\mathbb{R}^d$,  we will use the symbols $\chi_E$ for the characteristic function of $E$, $E^c$ for its complement and $tE$ for the rescaled set $\left\{tx:x\in E\right\}$. As usual, $|E|$ and $\mathcal{H}^s(E)$ ($s>0$) stand respectively for the Lebesgue measure and the Hausdorff $s$-dimensional measure of $E$; if $E$ is a piecewise regular hypersurface, $\mathcal{H}^{d-1}(E)$ coincides with its area measure. We will denote by $\mathcal{H}-dim(E)$ the Hausdorff dimension of the set $E$; for sufficiently regular sets, it coincides with the topological dimension of the set $E$, e.g. if $E$ is an open set of $\mathbb{R}^d$, $\mathcal{H}-dim(E)=d$ (for further details see Chapter 2, Section 8 in \cite{AFP}). For every open set $\Omega\subset\R^d$, we will denote by $L^p(\Omega)$ the usual Lebesgue space of (classes of) $p$-summable functions, by $W^{k,p}(\Omega)$ the Sobolev space of functions whose (weak) derivatives are $p$-summable up to order $k$ and by $H^k(\Omega)$ the (Hilbert) space $W^{k,2}(\Omega)$; whenever $f\in L^p(K)$ or $f\in W^{k,p}(K)$ for any compact set $K\subset\Omega$, we say that $f\in L^p_{loc}(\Omega)$ or $f\in W_{loc}^{k,p}(\Omega)$ respectively. For the convenience of the reader, whenever $A,B$ are open sets, $A\subseteq B$ and $u\in W^{k,p}_0(A)$, we will denote still by $u$ its zero extension to the whole $B$.

Moreover, for any open set $\Omega$ and any test function $u\in H^2_0(\Omega)$, we denote the Rayleigh quotient in \eqref{eq:buckvar} by $R_\Omega(u)$.

\begin{definition}[sets of finite perimeter]
Let $E\subseteq\mathbb{R}^d$ be measurable and let $\Omega\subseteq\mathbb{R}^d$ be open. We define the perimeter of $E$ in $\Omega$ as
$$P(E,\Omega):=\sup\left\{\int_E\text{div}(\varphi)\:dx : \varphi\in C^1_c(\Omega;\mathbb{R}^d),\|\varphi\|_\infty\le 1\right\}$$
and we say that $E$ is of finite perimeter in $\Omega$ if $P(E,\Omega)<+\infty$. If $\Omega=\mathbb{R}^d$ we simply say that $E$ is of finite perimeter and denote its perimeter by $P(E)$.
\end{definition}

Let us recall that, if $E$ is sufficiently regular (e.g. if $E$ is either a bounded Lipschitz domain or a convex set), it holds $P(E,\Omega)=\mathcal{H}^{d-1}(\partial E\cap\Omega)$.

In order to minimize Problem \eqref{eq:minbuck} and its weak version, Problem \eqref{eq:relaxminbuck}, using the direct methods of the Calculus of Variation (or some concentration-compactness argument), we need lower semicontinuity of the buckling eigenvalues with respect to the some suitable topology on the class of sets of $\mathbb{R}^d$ where the problem is set. As we will see, two good choices for our purposes are the Hausdorff topology of open sets (in the $2$-dimensional setting, where the perimeter constraint and the monotonicity of the functional imply the convexity of the optimizers) and the $L^1$-topology (in higher dimension, where a weak formulation in the class of measurable sets is needed).

\begin{definition}[convergence in measure]
We say that a sequence of measurable sets $(\Omega_n)_n$ \emph{converges in measure} to a measurable set $\Omega$ if $|\Omega_n\Delta\Omega|\to 0$, namely if $\chi_{\Omega_n}\to\chi_\Omega$ in $L^1(\R^d)$.

We say that $(\Omega_n)_n$ \emph{locally converges in measure} to $\Omega$ if $\chi_{\Omega_n}\to\chi_\Omega$ in $L^1_{loc}(\R^d)$.
\end{definition}

This kind of convergence turns out to be suitable to our purposes to have compactness of minimizing sequences of sets of finite perimeter.

\begin{proposition}[compactness for the convergence in measure and lower semicontinuity of the perimeter, \cite{AFP},Theorem 3.39]\label{pro:3.39}
Let $A\subset\mathbb{R}^d$ be an open bounded set and let $(E_n)_n$ be a sequence of subsets of $A$ with finite perimeter such that
$$\sup_n P(E_n,A)<+\infty.$$
Then, there exists $E\subseteq A$ with finite perimeter in $A$ such that, up to subsequences,
$$\chi_{E_n}\to\chi_E\quad\text{in $L^1(\R^d)$}$$
and
$$P(E,A)\le\liminf_{n\to\infty} P(E_n,A).$$
\end{proposition}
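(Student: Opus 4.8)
The plan is to recognize this as the classical compactness and lower semicontinuity statement for sets of finite perimeter and to reduce it to the corresponding facts for $BV$ functions via the identity $P(E_n,A)=|D\chi_{E_n}|(A)$. Since $A$ is bounded we have $\|\chi_{E_n}\|_{L^1(A)}\le|A|<+\infty$, so the hypothesis $\sup_n P(E_n,A)<+\infty$ says exactly that $(\chi_{E_n})_n$ is a bounded sequence in $BV(A)$. The whole statement then splits into: (i) extracting an $L^1$-convergent subsequence; (ii) identifying the limit as a characteristic function and upgrading the convergence to $L^1(\R^d)$; (iii) lower semicontinuity of the perimeter in $A$.

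For (i), fix an increasing sequence of bounded open sets $A_k$ with smooth boundary such that $A_k\Subset A_{k+1}$ and $\bigcup_k A_k=A$ (such an exhaustion always exists). For $\eps<\operatorname{dist}(A_k,\partial A)$ the mollifications $\chi_{E_n}^\eps:=\chi_{E_n}*\rho_\eps$ satisfy $\|\chi_{E_n}^\eps\|_{L^1(A_k)}\le|A|$ and $\|\nabla\chi_{E_n}^\eps\|_{L^1(A_k)}\le|D\chi_{E_n}|(A)\le\sup_m P(E_m,A)$, so for each fixed $\eps$ the family $(\chi_{E_n}^\eps)_n$ is bounded in $W^{1,1}(A_k)$ and hence, by the Rellich--Kondrachov theorem on the smooth bounded set $A_k$, precompact in $L^1(A_k)$; moreover $\sup_n\|\chi_{E_n}^\eps-\chi_{E_n}\|_{L^1(A_k)}\le\eps\,\sup_m P(E_m,A)\to0$ as $\eps\to0$, by the elementary estimate $\int_{A_k}|\chi_{E_n}(x+h)-\chi_{E_n}(x)|\,dx\le|h|\,|D\chi_{E_n}|(A)$ valid for $|h|<\operatorname{dist}(A_k,\partial A)$. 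Since a family uniformly approximable in $L^1(A_k)$ by precompact families is itself precompact, a diagonal extraction over $k$ yields a subsequence (not relabelled) and a function $u$ with $\chi_{E_n}\to u$ in $L^1_{loc}(A)$ and, up to a further subsequence, almost everywhere in $A$. For (ii): as $\chi_{E_n}\in\{0,1\}$, the a.e. limit satisfies $u\in\{0,1\}$ a.e., hence $u=\chi_E$ for some measurable $E\subseteq A$; since $\chi_{E_n}$ and $\chi_E$ all vanish outside the bounded set $A$, the pointwise majorant $|\chi_{E_n}-\chi_E|\le\chi_A\in L^1(\R^d)$ together with dominated convergence promotes the a.e. convergence to $\chi_{E_n}\to\chi_E$ in $L^1(\R^d)$.

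For (iii), the lower semicontinuity of the perimeter in $A$ is immediate from its variational definition: for every $\varphi\in C^1_c(A;\R^d)$ with $\|\varphi\|_\infty\le1$, the function $\Div\varphi$ is bounded with compact support in $A$, so $\int_E\Div\varphi\,dx=\lim_n\int_{E_n}\Div\varphi\,dx\le\liminf_n P(E_n,A)$, and taking the supremum over such $\varphi$ gives $P(E,A)\le\liminf_n P(E_n,A)$; in particular $E$ has finite perimeter in $A$, which completes the proof. The main obstacle in this scheme is the $L^1_{loc}$-compactness step (i), and there the crucial point is the uniform-in-$n$ translation estimate above — proved first for smooth functions and passed to $BV$ by mollification — which is exactly what makes the Rellich argument (equivalently, the Fr\'echet--Kolmogorov criterion) applicable simultaneously to the whole sequence. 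The remaining ingredients, namely the diagonal extraction along the exhaustion $A_k$ and the passage from $L^1_{loc}$ to $L^1(\R^d)$ via the integrable majorant $\chi_A$, are routine.
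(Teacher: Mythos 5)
Your argument is correct and is essentially the standard proof of the cited result (AFP, Theorem 3.39), which the paper itself does not reprove: $BV$-boundedness, local compactness via mollification, the uniform translation estimate and Rellich--Kondrachov on a smooth exhaustion, identification of the limit as a characteristic function, upgrade to $L^1(\R^d)$ by dominated convergence with majorant $\chi_A$, and lower semicontinuity straight from the distributional definition of the perimeter. All the steps, including the total-boundedness argument and the diagonal extraction, are sound.
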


In general, one of the main disadvantages of the convergence in measure is that no topological properties of converging sequences can be deduced for the limit set in this framework. To this aim, we introduce the Hausdorff convergences.

\begin{definition}[Hausdorff topology on closed sets]
Let $A,B\subseteq \mathbb{R}^d$ be closed. We define the \emph{Hausdorff distance between $A$ and $B$} by
$$d_H(A,B):=\max\left\{\sup_{x\in A}\text{dist}(x,B),\sup_{x\in B}\text{dist}(x,A)\right\}.$$
The topology induced by this distance is called \emph{Hausdorff topology (or simply $H$-topology) on closed sets}.
\end{definition}

The counterpart of the Hausdorff topology for open sets is defined below.

\begin{definition}[Hausdorff topology on open sets]
Let $A,B\subseteq\mathbb{R}^d$ be open. We define the \emph{Hausdorff-complementary distance between $A$ and $B$} by
$$d_{H^c}(A,B):=d_H(A^c,B^c).$$
The topology induced by this distance is called \emph{Hausdorff-complementary topology (or simply $H^c$-topology) on open sets}.
\end{definition}

This topology guarantees the compactness of sequences of open convex sets under suitable hypotheses. The following proposition contains some results proved in \cite{bub}, Section 2.4, and shows us that Hausdorff convergences preserve convexity and assure continuity for Lebesgue measure and perimeters of convex sets.

\begin{proposition}\label{prop:convexcontinuity}
The following results hold for convex sets:
\begin{itemize}
\item[(i)] If $A\subseteq B$, then $\mathcal{H}^{d-1}(\partial A)\le\mathcal{H}^{d-1}(\partial B)$;
\item[(ii)] If $A_n, A$ are closed (respectively open) and convex and $A_n\to A$ with respect to the $H$-topology (respectively $H^c$-topology), then $\chi_{A_n}\to\chi_{A}$ in $L^1$; moreover, if for every $n\in\N$ it holds $\mathcal{H}-dim(A)=\mathcal{H}-dim(A_n)$, then $\mathcal{H}^{d-1}(\partial A_n)\to\mathcal{H}^{d-1}(\partial A)$.
\item[(iii)] $|A|\le\rho\mathcal{H}^{d-1}(\partial A)$, where $\rho$ is the radius of the biggest ball contained in $A$.
\item[(iv)] If a sequence $(A_n)_n$ of closed convex sets $H$-converges to a closed set $A$, then $A$ is a closed convex set; if a sequence $(B_n)_n$ of open convex sets $H^c$-converges to an open set $B$, then $B$ is an open convex set.
\item[(v)] Let $D\subset\mathbb{R}^d$ a fixed compact set. Then, the class of the closed convex sets contained in $D$ is compact in the $H$-topology and the class of the open convex sets contained in $D$ is compact in the $H^c$-topology 
\end{itemize}
\end{proposition}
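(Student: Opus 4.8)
The plan is to derive the five items from three classical facts of convex geometry --- monotonicity of surface area under inclusion, the $1$-Lipschitzness of the metric projection onto a convex set, and the Blaschke selection theorem --- using also that the items reinforce one another; throughout, the degenerate cases $\mathcal{H}$-$\dim A\le d-1$ are reduced to lower dimension by projecting orthogonally onto the affine hull of $A$, so I take all sets below to be convex bodies. For (i): let $\pi_A$ denote the metric projection onto $\overline A$, which is $1$-Lipschitz, and assume $\mathcal{H}^{d-1}(\partial B)<\infty$ (so $B$ is bounded). Since interior points of $A$ are fixed by $\pi_A$, one has $\pi_A(\partial B)\subseteq\partial A$; conversely, for $y\in\partial A$ with outer unit normal $\nu$ the whole ray $\{y+t\nu:t\ge 0\}$ projects to $y$ and, starting in $\overline B$, must meet $\partial B$, so $\pi_A(\partial B)=\partial A$. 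As $1$-Lipschitz maps do not increase $\mathcal{H}^{d-1}$, this gives $\mathcal{H}^{d-1}(\partial A)=\mathcal{H}^{d-1}(\pi_A(\partial B))\le\mathcal{H}^{d-1}(\partial B)$.

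For (iii): let $B_\rho(x_0)\subseteq A$ be an inscribed ball of maximal radius. The inner parallel set $A_\rho:=\{x\in A:\operatorname{dist}(x,\partial A)\ge\rho\}$ is Lebesgue-null, since a ball $B_s(z)\subseteq A_\rho$ would force $B_{\rho+s}(z)\subseteq A$, contradicting maximality of $\rho$. Parametrising $A$, up to a null set, by inward-normal coordinates $x=y-t\nu(y)$ with $y\in\partial A$ and $0\le t\le\ell(y)$, where $\ell(y)$ is the distance to the cut locus, one has $\ell(y)\le\rho$ (the ball $B_{\ell(y)}(y-\ell(y)\nu(y))$ lies in $A$) and area-formula Jacobian $\prod_j(1-t\kappa_j(y))\in[0,1]$ for $0\le t\le\ell(y)$ (the principal curvatures are $\ge 0$ and no focal point is reached before the cut point); integrating, $|A|\le\int_{\partial A}\ell(y)\,d\mathcal{H}^{d-1}(y)\le\rho\,\mathcal{H}^{d-1}(\partial A)$. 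Alternatively one proves the claim for smooth strictly convex bodies via the tube formula and passes to the general case by approximation, using (ii) and the continuity of the inradius.

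For (iv), closed case: if $x,y\in A$ choose $x_n,y_n\in A_n$ with $x_n\to x$ and $y_n\to y$; then $\lambda x_n+(1-\lambda)y_n\in A_n$ converges to $\lambda x+(1-\lambda)y$, which lies within $d_H(A_n,A)\to 0$ of the closed set $A$, hence in $A$. In the open case ($B_n$ open convex, $B_n^c\to B^c$ in $d_H$, $B$ open) complements of convex sets are not convex, so instead: any point of $B$ has some ball $B_r(\cdot)\subseteq B$ around it, and then $\operatorname{dist}(\cdot,B_n^c)\ge r/2$ for large $n$ because $B_n^c\subseteq(B^c)_{r/2}$ while $\operatorname{dist}(\cdot,B^c)\ge r$; applying this to $x,y\in B$ and using convexity of $B_n$, the convex hull of the two small balls --- a neighbourhood of $[x,y]$ --- lies in $B_n$, so $\operatorname{dist}(z,B_n^c)$ stays bounded below by a positive constant for $z\in(x,y)$, while $z\in B^c$ would give $\operatorname{dist}(z,B_n^c)\le d_H(B_n^c,B^c)\to 0$; hence $z\in B$ and $[x,y]\subseteq B$. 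Then (v) for closed convex subsets of a fixed compact $D$ is the Blaschke selection theorem: the closed subsets of $D$ form a $d_H$-compact space (total boundedness via finite $\eps$-nets of $D$, plus completeness), and by (iv) the convex ones form a closed, hence compact, subfamily; the $H^c$-version follows by extracting $\overline{B_n}\to K$ (closed convex) and checking that $H$-convergence of convex bodies forces $d_{H^c}(B_n,\operatorname{int}K)\to 0$, the empty and lower-dimensional cases being immediate.

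Finally (ii): if $A$ is a convex body, fix $x_0\in\operatorname{int}A$ with $B_{r_0}(x_0)\subseteq A$; from $d_H(A_n,A)\to 0$ one obtains, for every $\delta\in(0,1)$ and all large $n$, the sandwich $(1-\delta)(A-x_0)+x_0\subseteq A_n\subseteq(1+\delta)(A-x_0)+x_0$, whence $|A_n\Delta A|\le\big((1+\delta)^d-(1-\delta)^d\big)|A|$, which is arbitrarily small; if $\mathcal{H}$-$\dim A\le d-1$ then $|A|=0$ and $A_n\subseteq(\overline A)_\eps$ has vanishing measure. For the perimeters, under $\mathcal{H}$-$\dim A=\mathcal{H}$-$\dim A_n$: in the full-dimensional case the same sandwich together with (i) gives $(1-\delta)^{d-1}\mathcal{H}^{d-1}(\partial A)\le\mathcal{H}^{d-1}(\partial A_n)\le(1+\delta)^{d-1}\mathcal{H}^{d-1}(\partial A)$ for large $n$; otherwise, projecting onto the (necessarily converging) affine hulls reduces either to this statement one dimension down or to $\mathcal{H}^{d-1}\equiv 0$. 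I expect the main obstacle to be exactly this perimeter-continuity in (ii) --- the equidimensionality hypothesis enters only here, to keep the lower-dimensional bookkeeping under control --- together with the passage from the closed to the complementary-Hausdorff setting in (iv)--(v), where the non-convexity of complements blocks any symmetric argument and forces the neighbourhood-of-a-segment device used above.
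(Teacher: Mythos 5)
The paper does not actually prove this proposition: it is stated as a collection of classical facts and attributed to \cite{bub}, Section 2.4, so there is no internal argument to compare against. Your write-up supplies the standard proofs, and the overall structure is sound: (i) via the $1$-Lipschitz metric projection $\pi_A$ mapping $\partial B$ onto $\partial A$; (iv)--(v) via segment limits, Blaschke selection, and the neighbourhood-of-a-segment device for the $H^c$ case (which is indeed the right fix for the non-convexity of complements); (ii) via the sandwich $(1-\delta)A^{x_0}\subseteq A_n\subseteq(1+\delta)A^{x_0}$ combined with (i) and scaling; and you correctly identify that the equidimensionality hypothesis in (ii) exists solely to rule out the collapse of full-dimensional bodies onto flat ones, where the surface area jumps by a factor of two. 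Three places rest on assertions that deserve a line of justification. First, the inner inclusion $(1-\delta)A^{x_0}\subseteq A_n$ in the sandwich is not a formal consequence of $A\subseteq(A_n)_\varepsilon$; it needs a separating-hyperplane argument (a point $z$ of $(1-\delta)A^{x_0}$ outside $\overline{A_n}$ carries a ball $B_{\delta r_0}(z)\subseteq A$ whose far half lies at distance $>\varepsilon$ from $A_n$, a contradiction). Second, in (iii) the normal-coordinate/cut-locus computation presupposes Alexandrov-type regularity of $\partial A$ and an area formula for a map that is not Lipschitz up to the boundary; a cleaner route, using only tools you already have, is the coarea formula for the concave function $x\mapsto\operatorname{dist}(x,A^c)$, namely $|A|=\int_0^\rho\mathcal{H}^{d-1}(\partial A_t)\,dt$ with $A_t=\{\operatorname{dist}(\cdot,A^c)>t\}$ convex and contained in $A$, so that (i) gives the bound at once. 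Third, the step ``$H$-convergence of $\overline{B_n}$ to $K$ forces $d_{H^c}(B_n,\operatorname{int}K)\to 0$'' in (v) is exactly the implication that fails for general (non-convex) open sets, so it should be carried out explicitly via the same sandwich; as you note, the degenerate limits are handled separately. None of these are wrong turns, only compressed steps, so the proposal is a correct, self-contained substitute for the citation.
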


We recall an important result due to F. John (see \cite{joh}), involving convex sets.

\begin{theorem}[John's ellipsoid Theorem]\label{teo:john}
Let $K\subset\R^d$ a compact convex set with non-empty interior. Then, there exists an ellipsoid $E\subset\R^d$ centered in $x_0\in E$ such that
$$E\subseteq K\subseteq x_0+d(E-x_0)$$
(where the ellipsoid $x_0+d(E-x_0)$ is obtained by a dilation of $E$ by a factor $d$ and with center $x_0$).
\end{theorem}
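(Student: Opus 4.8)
The plan is the classical maximal-volume argument. I would first produce the ellipsoid of largest volume contained in $K$ (the \emph{John ellipsoid}), then reduce by an affine change of variables to the case in which this ellipsoid is the closed unit ball $\overline{B_1(0)}$, and finally derive a contradiction from the assumption that $K$ escapes $\overline{B_d(0)}$ by exhibiting a strictly larger inscribed ellipsoid. For the first step, write a generic ellipsoid as $E_{c,A}=\{x\in\R^d:\langle A(x-c),x-c\rangle\le 1\}$ with $c\in\R^d$ and $A$ symmetric positive definite, so that $|E_{c,A}|=\omega_d(\det A)^{-1/2}$, where $\omega_d:=|B_1(0)|$. On the family $\{E_{c,A}:E_{c,A}\subseteq K\}$ the volume is bounded, and it is positive since $K$ has non-empty interior; along a maximizing sequence the centers stay in the compact set $K$, while the inclusion in the bounded set $K$ together with the resulting two-sided bound on $\det A$ forces all eigenvalues of $A$ into a fixed compact subinterval of $(0,+\infty)$. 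Extracting a limit one obtains an ellipsoid $E=E_{x_0,A_0}\subseteq K$ (using that $K$ is closed) realizing the maximal volume.

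Next, every invertible affine map sends ellipsoids to ellipsoids, preserves inclusions and volume ratios (hence the maximality of $E$), and commutes with the dilation $y\mapsto x_0+d(y-x_0)$ about the center; thus the assertion $E\subseteq K\subseteq x_0+d(E-x_0)$ is affine-invariant, and I may assume $x_0=0$ and $E=\overline{B_1(0)}$. It then remains to show that $K\subseteq\overline{B_d(0)}$.

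Suppose not: let $p\in K$ with $|p|=R>d$, and after a rotation take $p=(R,0,\dots,0)$. By convexity $K\supseteq C:=\operatorname{conv}(\overline{B_1(0)}\cup\{p\})=\bigcap_{|u|=1}\{x:u\cdot x\le\max(1,\,p\cdot u)\}$. For small $\delta>0$ I would test the ellipsoid
$$E'=\Big\{x\in\R^d:\frac{(x_1-\beta)^2}{\alpha^2}+\frac{x_2^2+\dots+x_d^2}{\gamma^2}\le1\Big\},\qquad \gamma^2=1-\delta,$$
with $\alpha=1+\beta$ (so that $E'$ is tangent to the unit sphere at $(-1,0,\dots,0)$) and $\beta$ fixed by requiring tangency of $E'$ to the supporting hyperplanes of the conical part of $\partial C$; this last requirement is the single scalar equation $\alpha^2/R^2+\gamma^2(1-1/R^2)=(1-\beta/R)^2$, which solves to $\alpha=1+\tfrac{(R-1)\delta}{2}$, $\beta=\tfrac{(R-1)\delta}{2}$. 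With these choices, comparing support functions, the inclusion $E'\subseteq C$ amounts to $h_{E'}(u)=\beta u_1+\sqrt{\alpha^2u_1^2+\gamma^2(1-u_1^2)}\le\max(1,Ru_1)$ for all unit $u$, which splits into two one-variable inequalities that hold because $t\mapsto\beta t+\sqrt{(\alpha^2-\gamma^2)t^2+\gamma^2}$ is convex and meets the right-hand side at the relevant endpoints. Finally
$$|E'|=\omega_d\,\alpha\,\gamma^{d-1}=\omega_d\Big(1+\tfrac{(R-1)\delta}{2}\Big)(1-\delta)^{(d-1)/2}=\omega_d\Big(1+\tfrac{R-d}{2}\,\delta+O(\delta^2)\Big),$$
which exceeds $\omega_d=|\overline{B_1(0)}|$ when $R>d$ and $\delta$ is small, contradicting maximality. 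Hence $K\subseteq\overline{B_d(0)}$, and undoing the normalization gives the theorem.

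The main point is the construction in the last step: one has to guess the right two-parameter family of competing ellipsoids and push the volume expansion precisely to the order at which the universal constant $d$ appears, the threshold $R=d$ being exactly where the first-order term $\tfrac{R-d}{2}\,\delta$ changes sign. The compactness of Step~1 and the affine-invariance of Step~2 are routine.
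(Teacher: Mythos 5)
Your argument is correct, but note that the paper does not prove this statement at all: it is recalled as a classical result with a bare citation to F.~John's 1948 paper, so there is no internal proof to compare against. What you have written is the standard modern (``Löwner--John'') proof by volume maximization and perturbation, rather than John's original optimality-conditions argument, and it is sound as a self-contained justification of the cited fact. I checked the key computation: with $\alpha=1+\beta$, $\gamma^2=1-\delta$ the tangency equation $\alpha^2/R^2+\gamma^2(1-1/R^2)=(1-\beta/R)^2$ does reduce to $2\beta(R+1)/R^2=\delta(R^2-1)/R^2$, i.e.\ $\beta=(R-1)\delta/2$, and the volume expansion $\omega_d\bigl(1+\tfrac{R-1}{2}\delta\bigr)(1-\delta)^{(d-1)/2}=\omega_d\bigl(1+\tfrac{R-d}{2}\delta+O(\delta^2)\bigr)$ is right, so the contradiction for $R>d$ is genuine. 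The support-function verification is also complete as stated: $f(t)=\beta t+\sqrt{(\alpha^2-\gamma^2)t^2+\gamma^2}$ is convex because $\alpha^2-\gamma^2=2\beta+\beta^2+\delta>0$, it equals $1$ at $t=-1$ and $t=1/R$ and satisfies $f(1)=1+(R-1)\delta\le R$ for $\delta\le 1$, so the two endpoint comparisons really do force $E'\subseteq C$. The only points left implicit are routine: that the limit of the maximizing sequence of ellipsoids is still contained in $K$ (use that $K$ is closed and approximate interior points), and that $1-\beta/R>0$ for small $\delta$ so the square root in the tangency condition can be extracted with the intended sign. For the purposes of this paper, which only uses the theorem to prevent degeneration of convex minimizing sequences in Section~5, the citation suffices, but your proof would serve as a valid appendix.
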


The following properties of the buckling eigenvalues will be useful throughout the paper.

\begin{proposition}[properties of the buckling eigenvalues]
\textcolor{white}{1}
\begin{itemize}
\item[(i)] Let $\Omega_1,\Omega_2\subset\R^d$ be open sets such that $\Omega_1\subset\Omega_2$; then, for any $h\in\N$,
$$\Lambda_h(\Omega_2)\le\Lambda_h(\Omega_1).$$
\item[(ii)] For every set of finite perimeter $\Omega\subset\R^d$ and $t>0$
$$\Lambda_h(t\Omega)=t^{-2}\Lambda_h(\Omega).$$
\end{itemize}
\begin{proof}
The proof of item (i) is straightforward since $H^2_0(\Omega_1)\subseteq H^2_0(\Omega_2)$. Item (ii) can be proved via the natural change of variables $t\Omega\ni x\mapsto y=\frac{x}{t}\in \Omega$ in the Rayleigh quotient and the one-to-one correspondence between $u\in H^2_0(t\Omega)$ and $u(t\,\cdot)\in H^2_0(\Omega)$.
\end{proof}
\end{proposition}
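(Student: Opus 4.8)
The plan is to derive both properties directly from the min-max characterization of $\Lambda_h$, with no PDE machinery required.

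For item (i), I would first note that if $\Omega_1\subseteq\Omega_2$ are open, then $C^\infty_c(\Omega_1)\subseteq C^\infty_c(\Omega_2)$, so passing to $H^2$-closures gives the inclusion $H^2_0(\Omega_1)\subseteq H^2_0(\Omega_2)$ (here one uses the convention fixed in Section~\ref{sec:prel} that elements of $H^2_0(\Omega_1)$ are tacitly extended by zero). Moreover, for $u\in H^2_0(\Omega_1)$ both $|\nabla u|$ and $\Delta u$ vanish a.e.\ outside $\Omega_1$, whence $R_{\Omega_1}(u)=R_{\Omega_2}(u)$. Consequently every admissible competitor in the min-max defining $\Lambda_h(\Omega_1)$, i.e.\ an $h$-dimensional subspace $V\subseteq H^2_0(\Omega_1)$, is also an admissible competitor for $\Lambda_h(\Omega_2)$ realizing the very same value of $\max_{u\in V\setminus\{0\}}R(u)$. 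Taking the infimum over the (a priori larger) family of $h$-dimensional subspaces of $H^2_0(\Omega_2)$ then yields $\Lambda_h(\Omega_2)\le\Lambda_h(\Omega_1)$.

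For item (ii), I would use the dilation $\Phi_t\colon t\Omega\to\Omega$, $\Phi_t(x)=x/t$, and the pull-back $u\mapsto v:=u\circ\Phi_t=u(\,\cdot/t)$. Since $\Phi_t$ is a diffeomorphism, this map is a linear bijection of $C^\infty_c(\Omega)$ onto $C^\infty_c(t\Omega)$ which, being continuous together with its inverse for the $H^2$-norm (up to explicit powers of $t$), extends to a linear bijection $H^2_0(\Omega)\to H^2_0(t\Omega)$ carrying $h$-dimensional subspaces to $h$-dimensional subspaces. The chain rule gives $\nabla v(x)=t^{-1}(\nabla u)(x/t)$ and $\Delta v(x)=t^{-2}(\Delta u)(x/t)$; substituting $y=x/t$, $dx=t^d\,dy$, one finds $\int_{t\Omega}(\Delta v)^2\,dx=t^{d-4}\int_\Omega(\Delta u)^2\,dy$ and $\int_{t\Omega}|\nabla v|^2\,dx=t^{d-2}\int_\Omega|\nabla u|^2\,dy$, hence $R_{t\Omega}(v)=t^{-2}R_\Omega(u)$. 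Feeding this identity into the min-max formula gives $\Lambda_h(t\Omega)=t^{-2}\Lambda_h(\Omega)$; note that this part of the argument in fact works for any open set, not only for sets of finite perimeter.

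Neither step presents a genuine obstacle; the only point that deserves a word of care is the assertion that the zero extension (in (i)) and the dilation pull-back (in (ii)) genuinely respect the clamped boundary condition, i.e.\ land inside the relevant $H^2_0$ space. Both facts follow from the definition of $H^2_0$ as the $H^2$-closure of $C^\infty_c$, combined with, respectively, the trivial inclusion $C^\infty_c(\Omega_1)\subseteq C^\infty_c(\Omega_2)$ and the fact that $u\mapsto u(\,\cdot/t)$ is a $C^\infty_c$-isomorphism bounded (with bounded inverse) in the $H^2$-norm by the explicit constants computed above.
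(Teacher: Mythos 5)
Your argument is correct and is exactly the paper's proof with the details filled in: item (i) via the inclusion $H^2_0(\Omega_1)\subseteq H^2_0(\Omega_2)$ and the fact that admissible subspaces for $\Omega_1$ compete for $\Omega_2$ with the same Rayleigh quotient, item (ii) via the dilation change of variables giving $R_{t\Omega}(u(\cdot/t))=t^{-2}R_\Omega(u)$. Nothing further is needed.
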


\begin{remark}[equivalent formulations]\label{rem:equiv}
In view of the scaling properties of the perimeter and of the buckling eigenvalues, Problem \eqref{eq:minbuck} is equivalent to the scale invariant problem
\begin{equation}\label{eq:scaleinv}
\min\left\{P(\Omega)^\frac{2}{d-1}\Lambda_1(\Omega):\Omega\subset\R^d\ \text{open},\ |\Omega|<\infty,\right\}.
\end{equation}
Indeed, for any $t>0$ one has
$$P(t\Omega)^\frac{2}{d-1}\Lambda_1(t\Omega)=(t^{d-1})^\frac{2}{d-1} P(\Omega)^\frac{2}{d-1}{t^{-2}}\Lambda_1(\Omega)=P(\Omega)^\frac{2}{d-1}\Lambda_1(\Omega)$$
Moreover, Problem \eqref{eq:minbuck} is also equivalent to the penalized problem
\begin{equation}\label{eq:penalized}
\min\left\{\Lambda_1(\Omega)+\beta P(\Omega):\Omega\subset\R^d\ \text{open},\ |\Omega|<\infty,\right\}
\end{equation}
for some $\beta>0$. More precisely, if $\hat{\Omega}$ is a solution of Problem \eqref{eq:minbuck}, there exists $\beta>0$ such that $\hat{\Omega}$ is a solution of Problem \eqref{eq:penalized} and, viceversa, if $\hat{\Omega}$ is a solution of Problem \eqref{eq:penalized}, then it solves Problem \eqref{eq:minbuck} with bound on the perimeters given by $p=P(\hat{\Omega})$.

The second implication is straightforward. To prove the equivalence, then, it is sufficient to consider a solution $\hat{\Omega}$ of Problem \eqref{eq:minbuck}, define the function on $\R_+$
$$F(t):=\Lambda_1(t\hat{\Omega})+\beta P(t\hat{\Omega})$$
and show that it attains its minimum in $t=1$. By the scaling properties of the perimeter and of the eigenvalues we have
$$F(t)=t^{-2}\Lambda_1(\hat{\Omega})+\beta t^{d-1}P(\hat{\Omega}).$$
To conclude, we choose $\beta>0$ in such a way that the derivative
$$F'(t)=-2t^{-3}\Lambda_1(\hat{\Omega})+\beta(d-1)t^{d-2}P(\hat{\Omega})$$
vanishes in $t=1$, i.e.
$$\beta=\frac{2\Lambda_1(\hat{\Omega})}{(d-1)P(\hat{\Omega})}.$$
\end{remark}

We close this section recalling two useful results involving some properties of the Sobolev spaces.
\begin{proposition}[see \cite{evans2018measure}, Theorem 4.4.(iv)]\label{teo:evans4.4}
Let $1\le p<\infty$ and let $f\in W^{1,p}(\R^d)$. Then $\nabla f=0$ a.e. on $\{f=0\}$.
\end{proposition}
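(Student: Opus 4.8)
The plan is to prove the classical fact that for $f \in W^{1,p}(\R^d)$ one has $\nabla f = 0$ almost everywhere on the level set $\{f = 0\}$, following the standard truncation-and-approximation strategy. Since the statement is cited from \cite{evans2018measure}, I would present only the essential steps.

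First I would reduce to the scalar components: it suffices to show that each partial derivative $\partial_i f$ vanishes a.e. on $\{f=0\}$, so without loss of generality one works with a single coordinate direction. Next, the key device is to compose $f$ with a smooth approximation of the function $t \mapsto |t|$ (or $t \mapsto t^+$). Concretely, for $\eta > 0$ set $\phi_\eta(t) := \sqrt{t^2 + \eta^2} - \eta$; then $\phi_\eta$ is smooth, $\phi_\eta(0) = 0$, $|\phi_\eta'| \le 1$, and $\phi_\eta'(t) \to \operatorname{sgn}(t)$ pointwise as $\eta \to 0^+$. By the chain rule for Sobolev functions (valid because $\phi_\eta$ is $C^1$ with bounded derivative), $\phi_\eta \circ f \in W^{1,p}_{loc}(\R^d)$ with $\nabla(\phi_\eta \circ f) = \phi_\eta'(f)\,\nabla f$.

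Then I would pass to the limit $\eta \to 0^+$. On one hand, $\phi_\eta(f) \to |f|$ in $L^p_{loc}$ (dominated convergence, since $0 \le \phi_\eta(|f|) \le |f|$), so by uniqueness of weak limits $\nabla|f| = \lim_{\eta\to 0}\phi_\eta'(f)\nabla f$ in the distributional sense; on the other hand, $\phi_\eta'(f)\nabla f \to \operatorname{sgn}(f)\,\nabla f$ a.e. and is dominated by $|\nabla f| \in L^p$, so the convergence also holds in $L^p_{loc}$. Hence $\nabla|f| = \operatorname{sgn}(f)\,\nabla f$ a.e., where $\operatorname{sgn}(0)$ may be taken to be $0$. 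The same argument applied to $f^+ = \tfrac{1}{2}(|f| + f)$ and $f^- = \tfrac{1}{2}(|f| - f)$ gives $\nabla f^+ = \chi_{\{f>0\}}\nabla f$ and $\nabla f^- = -\chi_{\{f<0\}}\nabla f$ a.e. Adding these, $\nabla f = \nabla f^+ - \nabla f^- = \chi_{\{f>0\}}\nabla f + \chi_{\{f<0\}}\nabla f = \chi_{\{f \ne 0\}}\nabla f$ a.e., which forces $\chi_{\{f=0\}}\nabla f = 0$ a.e., i.e. $\nabla f = 0$ a.e. on $\{f=0\}$.

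The main obstacle — which is really a technical point rather than a genuine difficulty — is justifying the chain rule $\nabla(\phi_\eta \circ f) = \phi_\eta'(f)\nabla f$ for Sobolev functions and the subsequent limit passage; this is handled by approximating $f$ by smooth functions $f_k \to f$ in $W^{1,p}$, noting the identity holds classically for $f_k$, and using the Lipschitz bound $|\phi_\eta| \le |t|$ together with $|\phi_\eta'| \le 1$ to pass to the limit in $k$ and then in $\eta$. An alternative, even shorter route is to invoke directly the standard result that for $f \in W^{1,p}$ and any Borel set $N \subset \R$ of Lebesgue measure zero one has $\nabla f = 0$ a.e. on $f^{-1}(N)$, applied to $N = \{0\}$; but since the singleton case is precisely what is asserted, the truncation proof above is the self-contained argument I would record.
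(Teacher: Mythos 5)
The paper does not prove this proposition at all: it is imported verbatim as a black-box citation of Theorem 4.4(iv) in Evans--Gariepy, so there is no ``paper proof'' to compare against. Your truncation argument is the standard self-contained proof of that cited result and is essentially correct, but one step deserves care. The identity $\nabla|f|=\operatorname{sgn}(f)\,\nabla f$ (with $\operatorname{sgn}(0)=0$) is correctly obtained, since $\phi_\eta'(0)=0$ for every $\eta$. However, if you then derive $\nabla f^{+}$ from the algebraic relation $f^{+}=\tfrac12(|f|+f)$, you get $\nabla f^{+}=\tfrac12(\operatorname{sgn}(f)+1)\nabla f$, which on $\{f=0\}$ equals $\tfrac12\nabla f$, not $\chi_{\{f>0\}}\nabla f$; the subsequent subtraction $\nabla f=\nabla f^{+}-\nabla f^{-}$ then collapses to the tautology $\nabla f=\nabla f$ and proves nothing. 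The correct route is the one you hint at parenthetically: apply the smoothing directly to $t\mapsto t^{+}$, i.e.\ take $\psi_\eta(t)=\sqrt{t^2+\eta^2}-\eta$ for $t>0$ and $\psi_\eta(t)=0$ for $t\le 0$, which is $C^1$ with $\psi_\eta'\equiv 0$ on $(-\infty,0]$; then $\psi_\eta'(f)\nabla f\to\chi_{\{f>0\}}\nabla f$ everywhere, giving $\nabla f^{+}=\chi_{\{f>0\}}\nabla f$ with no spurious contribution from $\{f=0\}$, and similarly $\nabla f^{-}=-\chi_{\{f<0\}}\nabla f$. With that fix the conclusion $\chi_{\{f=0\}}\nabla f=0$ a.e.\ follows exactly as you wrote, and the detour through $|f|$ is not even needed.
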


\begin{proposition}[see \cite{adams1999function}, Theorem 9.1.3]\label{teo:adams9.1.3}
Let $m$ be a positive integer, let $1<p<\infty$ and let $f\in W^{m,p}(\R^d)$. Let $\Omega\subset\R^d$ be an open set. Then the following statements are equivalent:
\begin{itemize}
\item[(a)] $D^\alpha f=0$ everywhere in $\Omega^c$ for all multiindices $\alpha$ such that $0\le|\alpha|\le m - 1$;
\item[(b)] $f\in W^{m,p}_0(\Omega)$.
\end{itemize}
\end{proposition}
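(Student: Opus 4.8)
\emph{Proof proposal.} The plan is to prove the two implications separately, reading the pointwise identities in (a) for the precise (quasi-continuous) representatives: since $|\alpha|\le m-1$, each $D^\alpha f$ lies in $W^{m-|\alpha|,p}(\R^d)\subseteq W^{1,p}(\R^d)$ and hence has a representative, unique off a set of Bessel $(m-|\alpha|,p)$-capacity zero, that is finely continuous; ``$D^\alpha f=0$ everywhere on $\Omega^c$'' is to be read as the vanishing of this representative on $\Omega^c$, equivalently $(m-|\alpha|,p)$-quasi-everywhere on $\Omega^c$. With this convention the implication (b)$\Rightarrow$(a) is soft: take $\varphi_j\in C_c^\infty(\Omega)$ with $\varphi_j\to f$ in $W^{m,p}(\R^d)$; for $|\alpha|\le m-1$ one has $D^\alpha\varphi_j\to D^\alpha f$ in $W^{m-|\alpha|,p}(\R^d)$, so along a subsequence the quasi-continuous representatives converge $(m-|\alpha|,p)$-quasi-everywhere, and since each $D^\alpha\varphi_j$ vanishes identically on the closed set $\Omega^c$ (it has compact support in $\Omega$), so does the limit off a capacity-null set; this is (a).

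For the substantial implication (a)$\Rightarrow$(b) I would proceed in two steps. \emph{Step 1 (reduction to compact support).} Fix $\eta\in C_c^\infty(B_2)$ with $0\le\eta\le1$ and $\eta\equiv1$ on $B_1$, and put $\eta_R(x):=\eta(x/R)$. By the Leibniz rule, for $|\beta|\le m-1$ one has $D^\beta(\eta_Rf)=\sum_{\gamma\le\beta}\binom{\beta}{\gamma}D^\gamma\eta_R\,D^{\beta-\gamma}f$, in which every derivative of $f$ that appears has order $\le m-1$ and hence vanishes on $\Omega^c$; thus $\eta_Rf$ still satisfies (a). Since $\eta_Rf\to f$ in $W^{m,p}(\R^d)$ as $R\to\infty$ and $W^{m,p}_0(\Omega)$ is closed in $W^{m,p}(\R^d)$, it suffices to treat $f$ with compact support. \emph{Step 2 (approximation by functions vanishing near $\Omega^c$).} This is where the whole content sits. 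One thickens $\Omega^c$ to open sets $U_j\downarrow\Omega^c$ and constructs smooth cut-offs $\psi_j$ with $0\le\psi_j\le1$, $\psi_j\equiv1$ on a neighborhood of $\Omega^c$, $\mathrm{supp}\,\psi_j$ contained in a slight enlargement of $U_j$, and suitable capacitary control on the derivatives of $\psi_j$; then $f_j:=(1-\psi_j)f$ vanishes near $\Omega^c$, hence (after mollification) lies in $W^{m,p}_0(\Omega)$, and the claim reduces to $\psi_jf\to0$ in $W^{m,p}(\R^d)$. Writing $D^\beta(\psi_jf)=\sum_{\gamma\le\beta}\binom{\beta}{\gamma}D^\gamma\psi_j\,D^{\beta-\gamma}f$ for $|\beta|\le m$, the term $\gamma=0$ is $\psi_jD^\beta f\to0$ in $L^p$ because $|\{\psi_j\neq0\}|\to0$; for $\gamma\neq0$ one has $|\beta-\gamma|\le m-1$, so $D^{\beta-\gamma}f$ vanishes quasi-everywhere on $\Omega^c$, and this smallness must be played off against the blow-up of $D^\gamma\psi_j$ (of order $\mathrm{dist}(\cdot,\Omega^c)^{-|\gamma|}$) on the thin shell $\mathrm{supp}\,\psi_j\setminus\Omega^c$.

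The crux, and the step I expect to be the main obstacle, is precisely the estimate $\int|D^\gamma\psi_j|^p|D^{\beta-\gamma}f|^p\,dx\to0$: here $D^{\beta-\gamma}f$ has exactly $|\gamma|$ orders of Sobolev smoothness to spend against a $\mathrm{dist}^{-|\gamma|}$ singularity, so this is the borderline (Hardy-type) regime and elementary bounds do not suffice. Making it work requires the $L^p$-capacity machinery: a Hedberg-type pointwise inequality estimating the precise representative of $D^{\beta-\gamma}f$ near a point of $\Omega^c$ by shrinking ball averages of higher-order derivatives, iterated over orders, together with the strong-type (Maz'ya--Hedberg) capacitary inequality $\int_0^\infty C_{m-|\sigma|,p}(\{|D^\sigma f|>t\})\,d(t^p)\le c\,\|f\|_{W^{m,p}}^p$, which simultaneously forces $|\{\psi_j\neq0\}|\to0$ and converts ``vanishing on $\Omega^c$'' into honest smallness on the shell. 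Equivalently, one may first invoke Calder\'on's theorem identifying, for integer $m$ and $1<p<\infty$, $W^{m,p}(\R^d)$ with the Bessel potential space, writing $f=G_m*g$ with $g\in L^p(\R^d)$ --- this being the only place the restriction $1<p<\infty$ is used --- and run the Hedberg--Wolff description of $L^{m,p}_0(\Omega)$ directly on $g$. Either way the proposition reduces to the standard Bessel-capacity/quasicontinuity/fine-topology toolkit, after which Steps 1--2 assemble routinely; as noted, the reverse inclusion costs almost nothing.
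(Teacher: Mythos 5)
First, a point of orientation: the paper does not prove this proposition at all. It is quoted, essentially verbatim, from Adams--Hedberg, \emph{Function Spaces and Potential Theory}, Theorem 9.1.3 --- this is Hedberg's spectral synthesis theorem, whose proof in that reference occupies the better part of a chapter and rests on the Hedberg--Wolff theorem on the boundedness of Wolff potentials, the Kellogg property, and the strong capacitary inequality. So there is no internal proof to compare yours against; the only fair comparison is with the source. Your reading of the hypothesis is the correct one (the pointwise identities in (a) are for the quasicontinuous representatives, i.e.\ hold $(m-|\alpha|,p)$-quasi-everywhere on $\Omega^c$ --- the paper's word ``everywhere'' is loose), and your treatment of (b)$\Rightarrow$(a) and of the reduction to compact support is fine.

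The genuine gap is exactly where you flag it, and it is larger than the proposal suggests. Step 2 as written --- thicken $\Omega^c$, build cutoffs $\psi_j\equiv 1$ near $\Omega^c$ with ``suitable capacitary control,'' and show $\psi_j f\to 0$ in $W^{m,p}$ --- is not how the hard implication is actually closed, and for general open $\Omega$ (no regularity of $\partial\Omega$ whatsoever) cutoffs with the properties you would need do not exist: the borderline term $\int |D^\gamma\psi_j|^p|D^{\beta-\gamma}f|^p\,dx$ sits precisely at the failure threshold of the pointwise Hardy inequality, and no choice of $\psi_j$ rescues it uniformly. Hedberg's argument instead approximates $f$ itself in stages, bootstrapping over the order of the derivatives and exploiting the $L^p$-boundedness of a nonlinear (Wolff) potential; the case $m=1$ admits a truncation shortcut, but for $m\ge 2$ truncation does not commute with $D^\alpha$ and the cutoff scheme genuinely breaks down. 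In short, your proposal is an accurate roadmap of where the difficulty lies and which toolkit resolves it, but the decisive estimate is deferred to that toolkit rather than proved, so as a self-contained argument it is incomplete. Since the paper treats the result as a citation, the honest course is to do the same rather than to gesture at a proof.
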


\section{Existence of optimal shapes for the first buckling eigenvalue: the planar case}\label{sec:d=2}

We are able to prove the existence result in dimension two, where the perimeter constraint assures compactness.

\begin{proposition}\label{pro:exis2}
Problem \eqref{eq:minbuck} admits a bounded, open, convex minimizer $\Omega\subset\R^2$ with maximal boundary length.
\begin{proof}
We first notice some simplifications that can be done.
\begin{itemize}
\item Since $\Omega\mapsto\Lambda_1(\Omega)$ is invariant under translations of the connected components, we can suppose that they lie at zero distance each other; for the same reason, we can suppose that all the admissible shapes $\Omega$ are contained in the same bounded design region. Indeed, for any open set $\Omega\subset\R^2$ the condition $P(\Omega)\le p$ implies $\text{diam}(\Omega)<p/2$ and then all the admissible domains can be translated in a bounded design region $D\subset\subset\R^2$.
\item For any admissible $\Omega$, its convex hull $\tilde{\Omega}$ is still an admissible set, since it is open and $\mathcal{H}^{1}(\partial\tilde{\Omega})\le \mathcal{H}^{1}(\partial\Omega)$. In view of the decreasing monotonicity of the map $\Omega\mapsto\Lambda_1(\Omega)$ with respect to the set inclusion, since $\Omega\subset\tilde{\Omega}$, we have
$$\Lambda_1(\tilde{\Omega})\le\Lambda_1(\Omega).$$
Then, we can reduce ourselves to the class of open convex sets with boundary length less than or equal to $p$.
\item In view of the scaling property of $\Lambda_1$ and the monotonicity with respect to inclusions, we have that the admissible sets can be assumed to have exactly boundary length equal to $p$ (the perimeter constraint is saturated).
\end{itemize}
In other words, in $\R^2$, without loss of generality we can study
\begin{equation}\label{eq:minbuck2}
\min\left\{\Lambda_1(\Omega):\Omega\subset D,\Omega\ \text{open and convex}, \mathcal{H}^{1}(\partial\Omega)=p\right\}.
\end{equation}
From now on, we denote $\mathcal{A}_p:=\{\Omega\subset D,\Omega\ \text{open and convex}, \mathcal{H}^{1}(\partial\Omega)=p\}$. Let now $(\Omega_n)_n$ be a minimizing sequence for \eqref{eq:minbuck2} and, for any $n\in\N$, let $u_n\in H^2_0(\Omega_n)$ an eigenfunction for $\Lambda_1(\Omega_n)$ with $\|\nabla u_n\|_{2}=1$. In view of the properties of the Hausdorff convergence (Proposition \ref{prop:convexcontinuity}), there exist a subsequence $(\Omega_{n_k})_k$ and an open convex set $\Omega\subset D$ such that
$$\Omega_{n_k}\to\Omega\quad\text{in the sense of Hausdorff};$$
notice that the convergence is also in measure and that $\Omega\neq\emptyset$. Otherwise, in view of the convergence in measure, we would have $|\Omega_{n_k}|\to 0$ and from the Payne inequality (see, for instance, inequality (3.26) in \cite{GGS}), we would obtain
$$\Lambda_1(\Omega_{n_k})\ge\lambda_2(\Omega_{n_k})\rightarrow+\infty,$$
where $\lambda_2$ is the second eigenvalue of the Dirichlet-Laplacian. This would contradict the minimality of the sequence $(\Omega_n)_n$. Then, $\Omega$ is an open convex set of positive measure and, in addition, it holds
$$\partial\Omega_{n_k}\to\partial\Omega\ \text{in the sense of Hausdorff}$$
and
$$\mathcal{H}^{1}(\partial\Omega)=\lim_{k\to+\infty}\mathcal{H}^{1}(\partial\Omega_{n_k})=p,$$
so $\Omega$ is an admissible set for Problem \eqref{eq:minbuck2}. Now, the corresponding subsequence of eigenfunctions $(u_{n_k})_k$ is bounded in $H^2_0(D)$; indeed
$$\|u_{n_k}\|_{H^2_0(D)}=\|u_{n_k}\|_{H^2_0(\Omega_{n_k})}=\int_{\Omega_{n_k}}(\Delta u_{n_k})^2\:dx=\Lambda_1(\Omega_{n_k})\le C.$$
Then, there exist a further subsequence (still denoted with the same index) and a function $u\in H^2_0(D)$ such that $u_{n_k}\rightharpoonup u$ weakly in $H^2_0(D)$; this implies that $u_{n_k}\to u$ strongly in $H^1_0(D)$. Moreover, since $\Omega_{n_k}$ converges to $\Omega$ in measure, we deduce that $u\in H^2_0(\Omega)$ is an admissible test function for $\Lambda_1(\Omega)$.

In view of the lower semicontinuity of the $H^2_0$-norm with respect to the weak convergence in $H^2_0(D)$ it finally holds
$$\Lambda_1(\Omega)\le\int_{\Omega}(\Delta u)^2\:dx\le\liminf_{k\to+\infty}\int_{\Omega_{n_k}}(\Delta u_{n_k})^2\:dx=\liminf_{k\to+\infty}\Lambda_1(\Omega_{n_k})=\inf_{\Omega\in\mathcal{A}_p}\Lambda_1(\Omega),$$
proving the thesis.
\end{proof}
\end{proposition}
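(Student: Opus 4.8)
The plan is to turn Problem~\eqref{eq:minbuck} (in $d=2$) into a minimization over a \emph{compact} class of convex sets and then apply the direct method, exploiting the fact that in the plane the perimeter bound automatically forces boundedness.

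First I would carry out three reductions. \emph{(Convexification.)} For any admissible $\Omega$, its convex hull $\tilde\Omega$ is open, contains $\Omega$, and by a classical planar estimate $\mathcal H^1(\partial\tilde\Omega)\le\mathcal H^1(\partial\Omega)\le p$, so $\tilde\Omega$ is admissible and, by monotonicity of $\Lambda_1$ under inclusion, $\Lambda_1(\tilde\Omega)\le\Lambda_1(\Omega)$; hence it suffices to minimize over open convex sets. \emph{(Saturation.)} If $\mathcal H^1(\partial\Omega)<p$, dilating by $t>1$ close to $1$ preserves admissibility while $\Lambda_1(t\Omega)=t^{-2}\Lambda_1(\Omega)$ strictly decreases, so one may assume $\mathcal H^1(\partial\Omega)=p$. \emph{(Confinement.)} An open convex set of perimeter $p$ has diameter $<p/2$, and $\Lambda_1$ is translation invariant, so every admissible set can be placed inside one fixed large ball $D$. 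The problem thus becomes $\min\{\Lambda_1(\Omega):\Omega\in\mathcal A_p\}$ with $\mathcal A_p=\{\Omega\subset D:\ \Omega\ \text{open, convex},\ \mathcal H^1(\partial\Omega)=p\}$.

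Next I would run the direct method. Let $(\Omega_n)\subset\mathcal A_p$ be minimizing and let $u_n\in H^2_0(\Omega_n)$ be a first eigenfunction with $\|\nabla u_n\|_{L^2}=1$. By Proposition~\ref{prop:convexcontinuity}(v) a subsequence $\Omega_{n_k}$ converges in the $H^c$-topology to an open convex $\Omega\subseteq D$, and by (ii) also $\chi_{\Omega_{n_k}}\to\chi_\Omega$ in $L^1$. The crucial point is that $\Omega\neq\emptyset$: if $|\Omega|=0$ then $|\Omega_{n_k}|\to0$, and combining the Payne inequality $\Lambda_1(\Omega_{n_k})\ge\lambda_2(\Omega_{n_k})$ with the fact that the second Dirichlet eigenvalue blows up as the volume shrinks gives $\Lambda_1(\Omega_{n_k})\to+\infty$, contradicting minimality. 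Hence $\Omega$ has nonempty interior, so $\Omega$ and each $\Omega_{n_k}$ are $2$-dimensional, and Proposition~\ref{prop:convexcontinuity}(ii) yields $\mathcal H^1(\partial\Omega)=\lim_k\mathcal H^1(\partial\Omega_{n_k})=p$, i.e.\ $\Omega\in\mathcal A_p$. Meanwhile $\|u_{n_k}\|_{H^2_0(D)}^2=\int(\Delta u_{n_k})^2\,dx=\Lambda_1(\Omega_{n_k})$ is bounded, so after a further subsequence $u_{n_k}\rightharpoonup u$ weakly in $H^2_0(D)$ and, by the compact embedding $H^2_0(D)\hookrightarrow H^1_0(D)$, $u_{n_k}\to u$ strongly in $H^1_0(D)$; in particular $\|\nabla u\|_{L^2}=1$, so $u\neq0$.

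Finally I would check that $u$ is a legitimate competitor for $\Lambda_1(\Omega)$ and pass to the limit. Since $u_{n_k}\equiv0$ on $\Omega_{n_k}^c$, we have $u_{n_k}(1-\chi_{\Omega_{n_k}})=0$ and $\nabla u_{n_k}(1-\chi_{\Omega_{n_k}})=0$ a.e.; letting $k\to\infty$ (strong $L^2$ convergence of $u_{n_k},\nabla u_{n_k}$ against the bounded, a.e.\ convergent factors $1-\chi_{\Omega_{n_k}}$) gives $u=0$ and $\nabla u=0$ a.e.\ on $\Omega^c$, whence $u\in H^2_0(\Omega)$ by Proposition~\ref{teo:adams9.1.3}. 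Then weak lower semicontinuity of $v\mapsto\int(\Delta v)^2\,dx$ on $H^2_0(D)$, together with $\int_\Omega|\nabla u|^2\,dx=1$, gives
\[
\Lambda_1(\Omega)\le\frac{\int_\Omega(\Delta u)^2\,dx}{\int_\Omega|\nabla u|^2\,dx}=\int_\Omega(\Delta u)^2\,dx\le\liminf_{k\to\infty}\int_{\Omega_{n_k}}(\Delta u_{n_k})^2\,dx=\liminf_{k\to\infty}\Lambda_1(\Omega_{n_k})=\inf_{\mathcal A_p}\Lambda_1,
\]
so $\Omega$ is a bounded, open, convex minimizer with $\mathcal H^1(\partial\Omega)=p$. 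The only genuinely nontrivial step is the non-degeneracy of the limit set $\Omega$; this is precisely where having a \emph{perimeter} bound in the plane (rather than only a volume bound) pays off, since it simultaneously yields a priori boundedness and, through the continuity of the perimeter on convex sets, admissibility of the limit.
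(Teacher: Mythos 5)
Your proposal is correct and follows essentially the same route as the paper's proof: the same three reductions (convexification, saturation of the perimeter, confinement in a bounded design region), Hausdorff compactness of convex sets, non-degeneracy of the limit via the Payne inequality, and weak lower semicontinuity of $v\mapsto\int(\Delta v)^2\,dx$ in $H^2_0(D)$. The only difference is that you make explicit the verification that $u\in H^2_0(\Omega)$ (via the products with $1-\chi_{\Omega_{n_k}}$ and Proposition~\ref{teo:adams9.1.3}), a step the paper leaves implicit.
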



\section{Existence of optimal shapes for the first buckling eigenvalue: the case of higher dimension}\label{sec:d>2}
 
In general, the perimeter constraint does not imply the boundedness and the convexity of optimal shapes in higher dimension (this is a peculiarity of the 2-dimensional case). To prove the existence of optimal shapes in higher dimension, we follow a different strategy. By following the approach of \cite{AshBuc2003} (later used in \cite{sto2021}), we look for minimizers for Problem \eqref{eq:minbuck} with neither topological constraint nor bounded design region via a concentration-compactness argument. The main difference is that in the previous works the authors dealt with a measure constraint, whereas we have to preserve a perimeter constraint. Proposition \ref{pro:3.39} suggests a good tool to this aim, since the result guarantees the lower semicontinuity of the perimeter and the compactness with respect to the convergence in measure for a sequence of measurable sets; in view of this, the original framework of open sets does not seem to be the most appropriate to prove an existence result for Problem \eqref{eq:minbuck}.

A good strategy in this sense is provided in \cite{deve}, where the authors use a suitable weak formulation of the Dirichlet-Laplacian eigenvalues in the framework of sets of finite perimeter. More precisely, in order to set the problem in the class of measurable sets instead of working only with open sets, for every set of finite perimeter $\Omega\subset\R^d$ they introduce the Sobolev-like spaces
$$\tilde{H}_0^1(\Omega):=\left\{u\in H^1(\R^d):u=0\ \text{a.e. in}\ \Omega^c\right\}$$
and they prove the existence of optimal shapes for a weaker version of the functional with perimeter constraint. After proving the existence, they are able to come back to the original problem, showing that weak minimizers are, in fact, open sets.

\begin{definition}[weak eigenvalues]
Let $\Omega\subset\R^d$ be a set of finite perimeter. We define the Sobolev-like space $\tilde{H}_0^2(\Omega)\subset H^2(\R^d)$ as
\begin{equation}\label{eq:htilde20}
\tilde{H}_0^2(\Omega):=\left\{u\in H^2(\R^d):u=0\ \text{a.e. in}\ \Omega^c\right\}.
\end{equation}
We define the $h$-th weak buckling eigenvalue by
$$\tilde{\Lambda}_h(\Omega)=\inf_{V\subset \tilde{H}_0^2(\Omega),\dim V=h}\max_{u\in V\setminus\left\{0\right\}}\frac{\displaystyle\int_\Omega(\Delta u)^2\:dx}{\displaystyle\int_\Omega |\nabla u|^2\:dx}.$$
In particular, the first weak buckling eigenvalue of $\Omega$ is given by
\begin{equation}\label{eq:relaxbuckvar}
	\tilde{\Lambda}_1(\Omega)=\inf_{u\in \tilde{H}_0^2(\Omega)\setminus\left\{0\right\}}\frac{\displaystyle\int_\Omega(\Delta u)^2\:dx}{\displaystyle\int_\Omega |\nabla u|^2\:dx},
\end{equation}
\end{definition}

Once given the weaker version of the functional, we look for a right class of admissible sets; our choice is the following:
\begin{equation*}
\tilde{\mathcal{A}}_p:=\left\{\Omega\subset\R^d\ \text{measurable},\ |\Omega|<\infty,\ P(\Omega)\le p\right\}.
\end{equation*}
The new problem to consider is thus
\begin{equation}\label{eq:relaxminbuck}
\min\left\{\tilde{\Lambda}_1(\Omega):\Omega\in\tilde{\mathcal{A}}_p\right\}.
\end{equation}
The choice of this weaker framework has been made in order to ensure the completeness of the class of admissible sets with respect to the local convergence in measure: in other words, a converging sequence of admissible sets converges (locally in measure) to an admissible set. Notice that, in the original statement \eqref{eq:minbuck}, this request fails: the limit set of a sequence of open sets converging in measure is not open, in general. For that reason, it has been necessary to choose also the functional in a weaker sense, keeping into account the new functional space for the test functions. Only the inclusion $\tilde{H}_0^2(\Omega)\supseteq H_0^2(\Omega)$ is valid in general, even for open sets. Nevertheless, if the set $\Omega$ is open and sufficiently regular, the equality $\tilde{H}_0^2(\Omega)=H_0^2(\Omega)$ holds; the equality fails whenever $\Omega$ has inner cracks, e.g. if $\Omega\subset\R^3$ is a ball with an equatorial cut that removes a maximal half-disk, namely
$$\Omega=B_1(0)\setminus\{x_3=0, x_1\ge 0\}.$$
Then, in general, for any open set $\Omega$ it holds
$$\tilde{\Lambda}_h(\Omega)\le\Lambda_h(\Omega).$$

Notice that also in Problem \eqref{eq:relaxminbuck} we avoid the apriori prescription of a bounded design region where the admissible sets are contained. A similar assumption would lead straightforwardly to the compactness of a minimizing sequence of admissible sets, see \cite[Theorem 3.39]{AFP}.

We now state some useful properties of the weak eigenvalues.

\begin{proposition}[properties of the weak eigenvalues]\textcolor{white}{1}
\begin{itemize}
\item[(i)] Let $\Omega_1,\Omega_2\subset\R^d$ be sets of finite perimeter such that $|\Omega_1\Delta\Omega_2|=0$; then, for any $k\in\N$,
$$\tilde{\Lambda}_h(\Omega_1)=\tilde{\Lambda}_h(\Omega_2).$$
\item[(ii)] Let $\Omega_1,\Omega_2\subset\R^d$ be sets of finite perimeter such that $|\Omega_2\setminus\Omega_1|=0$ (i.e. $\Omega_1\subset\Omega_2$ up to a $\mathcal{L}^d$-negligible set); then, for any $h\in\N$,
$$\tilde{\Lambda}_h(\Omega_2)\le\tilde{\Lambda}_h(\Omega_1).$$
\item[(iii)] For every set of finite perimeter $\Omega\subset\R^d$ and $t>0$
$$\tilde{\Lambda}_h(t\Omega)=t^{-2}\tilde{\Lambda}_h(\Omega).$$
\end{itemize}
\begin{proof}
Item (i) follows by observing that $|\Omega_1\Delta\Omega_2|=0$ implies $\tilde{H}^2_0(\Omega_1)=\tilde{H}^2_0(\Omega_2)$. Items (ii) and (iii) are proven in the same way as in the classical framework.
\end{proof}
\end{proposition}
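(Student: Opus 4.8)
The plan is to verify each of the three items by working directly with the Sobolev-like space $\tilde H^2_0(\Omega)$ defined in \eqref{eq:htilde20} and the min-max characterization of $\tilde\Lambda_h$. The guiding principle is that each property of the weak eigenvalue reduces to a corresponding property of the admissible test spaces, because the Rayleigh quotient $\int_\Omega(\Delta u)^2\,dx/\int_\Omega|\nabla u|^2\,dx$ depends on $\Omega$ only through which functions are admissible (and not through the domain of integration, since $u=0$ a.e.\ in $\Omega^c$ forces $\Delta u=0$ and $\nabla u=0$ a.e.\ on $\Omega^c$ by Propositions \ref{teo:evans4.4} and, applied componentwise, again \ref{teo:evans4.4}; thus the integrals may equally be taken over $\R^d$).

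For item (i), I would observe that the defining condition ``$u=0$ a.e.\ in $\Omega^c$'' is insensitive to modifying $\Omega$ on an $\Leb d$-negligible set: if $|\Omega_1\Delta\Omega_2|=0$ then $|\Omega_1^c\Delta\Omega_2^c|=0$, so ``$u=0$ a.e.\ in $\Omega_1^c$'' and ``$u=0$ a.e.\ in $\Omega_2^c$'' are the same condition on $H^2(\R^d)$. Hence $\tilde H^2_0(\Omega_1)=\tilde H^2_0(\Omega_2)$ as sets of functions, and moreover the Rayleigh quotients agree for every admissible $u$ (the integrals over $\Omega_1$ and $\Omega_2$ coincide, or one simply integrates over $\R^d$). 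Taking the infimum over $h$-dimensional subspaces $V$ then gives $\tilde\Lambda_h(\Omega_1)=\tilde\Lambda_h(\Omega_2)$ for all $h$.

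For item (ii), the condition $|\Omega_2\setminus\Omega_1|=0$ gives $|\Omega_1^c\setminus\Omega_2^c|=0$, so ``$u=0$ a.e.\ in $\Omega_1^c$'' is at least as strong as ``$u=0$ a.e.\ in $\Omega_2^c$''; therefore $\tilde H^2_0(\Omega_1)\subseteq\tilde H^2_0(\Omega_2)$. Since the min-max is an infimum over a larger family of competitor subspaces $V$ when the ambient space is larger, we get $\tilde\Lambda_h(\Omega_2)\le\tilde\Lambda_h(\Omega_1)$; equivalently, any $h$-dimensional $V\subset\tilde H^2_0(\Omega_1)$ is also admissible for $\tilde\Lambda_h(\Omega_2)$ with the same value of the inner maximum. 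For item (iii), I would use the bijection $\tilde H^2_0(t\Omega)\to\tilde H^2_0(\Omega)$ given by $u\mapsto v:=u(t\,\cdot)$: one checks $v\in H^2(\R^d)$ with $v=0$ a.e.\ on $\Omega^c$, and the change of variables $y=x/t$ yields $\int(\Delta u)^2\,dx = t^{d-4}\int(\Delta v)^2\,dy$ and $\int|\nabla u|^2\,dx = t^{d-2}\int|\nabla v|^2\,dy$, so the Rayleigh quotient scales by $t^{-2}$. This bijection maps $h$-dimensional subspaces to $h$-dimensional subspaces, hence $\tilde\Lambda_h(t\Omega)=t^{-2}\tilde\Lambda_h(\Omega)$.

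There is essentially no hard step here: the whole point is that these are the exact analogues, in the measurable-set/$\tilde H^2_0$ setting, of the classical properties already recorded for $\Lambda_h$ in the earlier Proposition (properties of the buckling eigenvalues), and the only thing to be careful about is the bookkeeping between a set and its complement under symmetric differences. The one place warranting a sentence of justification is why the Rayleigh quotient is unchanged when the domain of integration is altered on a null set or replaced by $\R^d$ — this is where Proposition \ref{teo:evans4.4} enters, ensuring $\nabla u$ (and then, applied to each component of $\nabla u$, the second derivatives, hence $\Delta u$) vanish a.e.\ outside $\Omega$. Beyond that, the proof is the routine transcription already indicated in the statement's own proof sketch.
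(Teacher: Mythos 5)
Your proposal is correct and follows the same route as the paper's (very terse) proof: identify $\tilde H^2_0(\Omega_1)=\tilde H^2_0(\Omega_2)$ for item (i), use the inclusion of test spaces for item (ii), and the change of variables $u\mapsto u(t\,\cdot)$ for item (iii); your extra observation that Proposition \ref{teo:evans4.4} forces $\nabla u$ and $\Delta u$ to vanish a.e.\ on $\Omega^c$, so the Rayleigh quotient is insensitive to null-set modifications of the domain of integration, is a correct and worthwhile detail that the paper leaves implicit.
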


\begin{remark}[equivalent formulations]\label{rem:relaxequiv}
As we proved for Problem \eqref{eq:minbuck}, also Problem \eqref{eq:relaxminbuck} has two equivalent formulations; more precisely it is equivalent to the scale invariant problem 
\begin{equation}\label{eq:relaxscaleinv}
\min\left\{P(\Omega)^\frac{2}{d-1}\tilde{\Lambda}_1(\Omega):\Omega\subset\R^d\ \text{measurable},\ |\Omega|<\infty,\right\}.
\end{equation}
and to the penalized problem
\begin{equation}\label{eq:relaxpenalized}
\min\left\{\tilde{\Lambda}_1(\Omega)+\beta P(\Omega):\Omega\subset\R^d\ \text{measurable},\ |\Omega|<\infty,\right\}
\end{equation}
for some $\beta>0$. The proof is the same as in Remark \ref{rem:equiv}.
\end{remark}

\begin{remark}[$\varepsilon$-eigenfunctions]
We point out that the infimum in \eqref{eq:relaxbuckvar} is not attained, in general. For that reason, we introduce the term `$\varepsilon$-eigenfunction' to denote a test function $u^\varepsilon\in \tilde{H}_0^2(\Omega)$ that satisfies
\begin{equation}\label{eq:uepsilon}
\tilde{\Lambda}_1(\Omega)\le\frac{\displaystyle\int_\Omega(\Delta u^\varepsilon)^2\:dx}{\displaystyle\int_\Omega|\nabla u^\varepsilon|^2\:dx}<\tilde{\Lambda}_1(\Omega)+\varepsilon.
\end{equation}
for some $\varepsilon>0$.
\end{remark}

The following result is a version of \cite[Lemma 3.3]{bucvar} adapted to our Sobolev-like spaces.

\begin{lemma}\label{pro:bucvar}
Let $(w_n)_n$ be a bounded sequence in $H^1(\R^d)$ such that $\|w_n\|_{L^2(\R^d)}=1$ and $w_n=0$ a.e. in $\Omega_n^c$ with $|\Omega_n|\le C$. There exists a sequence of vectors $(y_n)_n\subset\R^d$ such that the sequence $(w_n(\cdot+y_n))_n$ does not possess a weakly convergent subsequence in $H^1(\R^d)$.
\end{lemma}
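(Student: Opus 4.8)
The plan is to argue by contradiction: suppose that for every sequence of translation vectors $(y_n)_n\subset\R^d$, the translated sequence $(w_n(\cdot+y_n))_n$ \emph{does} possess a weakly convergent subsequence in $H^1(\R^d)$. The strategy is then to show that this forces $\|w_n\|_{L^2(\R^d)}\to 0$, contradicting the normalization $\|w_n\|_{L^2(\R^d)}=1$. This is the classical ``vanishing is impossible under $L^2$-normalization plus bounded support measure'' phenomenon, and the proof should run parallel to \cite[Lemma 3.3]{bucvar}, with the only adjustment being that the constraint $w_n=0$ a.e.\ in $\Omega_n^c$ with $|\Omega_n|\le C$ plays the role that bounded measure of support plays there.

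The key steps, in order: First I would invoke the concentration-compactness dichotomy (Lions) for the sequence of probability measures $\mu_n:=|w_n|^2\,\Leb{d}$, which have total mass $1$. This yields, up to subsequences, one of three alternatives: compactness (up to translations $y_n$, the mass stays tight), vanishing ($\sup_{y\in\R^d}\int_{B_R(y)}|w_n|^2\,dx\to 0$ for every fixed $R>0$), or dichotomy (the mass splits into two pieces escaping to infinity from each other). Second, I would rule out vanishing directly: since $w_n$ is bounded in $H^1(\R^d)$ and supported in $\Omega_n$ with $|\Omega_n|\le C$, one has for any ball $B_R(y)$ that $\int_{B_R(y)}|w_n|^2\,dx$ can be estimated using the Gagliardo--Nirenberg--Sobolev inequality on $B_R(y)$ together with the measure bound $|\Omega_n\cap B_R(y)|\le C$; choosing $R$ large and summing over a cover of $\R^d$ by balls of radius $R$, vanishing would force $\|w_n\|_{L^2}\to 0$ — contradiction. (Equivalently: vanishing plus bounded $H^1$-norm implies $w_n\to 0$ in $L^q$ for subcritical $q$, and then the measure bound on the support together with Hölder upgrades this to $L^2$ convergence to $0$.) Third, in the compactness case, choosing the translations $y_n$ from the dichotomy/compactness lemma, the sequence $(w_n(\cdot+y_n))_n$ is tight and bounded in $H^1(\R^d)$, hence has a subsequence converging weakly in $H^1(\R^d)$ to some $w$ with $\|w\|_{L^2}=1$ (tightness upgrades weak $L^2$-convergence to strong $L^2$-convergence, so the mass is not lost); this is precisely the situation excluded by the hypothesis of the contradiction argument, \emph{except} that the hypothesis only grants a weakly convergent subsequence along \emph{that particular} choice of $y_n$ — which is exactly what compactness provides, so we reach the contradiction. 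Fourth, in the dichotomy case, I would rescale and localize (cut off with smooth functions adapted to the two escaping pieces) to produce two sequences, each of positive $L^2$-mass bounded away from $0$ and each still with support of measure $\le C$; iterating the argument on the lighter piece, or using that one of the two pieces must again (by the same trichotomy) eventually be compact after translation, contradicts the hypothesis as before.

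The main obstacle I expect is the dichotomy case: turning the abstract splitting of the measures $\mu_n$ into genuine cut-off functions $\varphi w_n$, $(1-\varphi)w_n$ that are still (approximately) supported in sets of measure $\le C$ and still bounded in $H^1$, while keeping track of the $L^2$-masses, requires some care — in particular one must check that the cut-off does not create uncontrolled $H^1$-energy and that at least one of the two resulting sequences, after an appropriate translation, falls into the compactness alternative and thus contradicts the standing hypothesis. An alternative route that sidesteps the full trichotomy bookkeeping is to use the Lieb translation lemma (or the refinement due to Lions): if $(w_n)_n$ is bounded in $H^1(\R^d)$ and does not vanish, there exist $y_n\in\R^d$ with $w_n(\cdot+y_n)\weakto w\neq0$; combined with the vanishing-exclusion step above (which uses the measure bound on the supports), this immediately produces the desired translations $y_n$ along which a weak limit exists — contradicting the hypothesis — and this is likely the cleanest way to finish, mirroring \cite{bucvar} as closely as possible.
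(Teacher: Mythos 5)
The paper does not actually prove this lemma: it is stated as an adaptation of \cite[Lemma 3.3]{bucvar} and is only ever used through its contrapositive. Your analytic ingredients (concentration--compactness for the measures $|w_n|^2\,dx$, exclusion of vanishing via the bound $|\Omega_n|\le C$ together with Sobolev embedding, and the Lieb-type translation lemma) are exactly the right ones and coincide with the standard proof of the Bucur--Varchon lemma. In particular your vanishing-exclusion step is sound: vanishing plus the $H^1$ bound gives $w_n\to 0$ in $L^q$ for $2<q<2^*$, and H\"older on $\Omega_n$ then forces $\|w_n\|_{L^2}\to 0$, against the normalization.

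However, the logical architecture of your argument is broken. The conclusion you set out to prove by contradiction --- that $(w_n(\cdot+y_n))_n$ ``does not possess a weakly convergent subsequence in $H^1(\R^d)$'' --- is, read literally, false for every choice of $y_n$: translation preserves the $H^1$ bound, so Banach--Alaoglu always yields a weakly convergent subsequence. Your contradiction hypothesis (``every translation admits a weakly convergent subsequence'') is therefore vacuously true and can never produce a contradiction; indeed in your compactness step you exhibit a translation along which a weak limit $w\neq 0$ exists and then assert this is ``precisely the situation excluded by the hypothesis'', which it is not --- producing a weakly convergent subsequence is consistent with, not contrary to, the existence of one. The statement that is actually true, and the one the paper uses (it applies the lemma in the vanishing case, where every translation converges weakly to $0$), is that there exist $y_n$ such that no subsequence of $(w_n(\cdot+y_n))_n$ converges weakly to \emph{zero}. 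This is proved directly, not by contradiction: H\"older on $\Omega_n$ gives $\|w_n\|_{L^q}\ge C^{-(1/2-1/q)}$ for $2<q<2^*$, which rules out vanishing, so $\sup_{y}\int_{B_1(y)}|w_n|^2\ge\delta>0$ for all large $n$; choosing $y_n$ to nearly attain this supremum, the compact embedding $H^1(B_1)\hookrightarrow L^2(B_1)$ forces every weak subsequential $H^1$-limit $w$ of $w_n(\cdot+y_n)$ to satisfy $\int_{B_1}|w|^2\ge\delta$, hence $w\neq 0$. Your closing appeal to Lieb's lemma is essentially this argument, but as you phrase it it ``produces translations along which a weak limit exists'', which would refute rather than establish the lemma as literally stated. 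You should restate the conclusion correctly and drop the contradiction scaffolding; the dichotomy bookkeeping then becomes unnecessary.
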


We use the previous result to get the contradiction in the vanishing case in Theorem \ref{teo:relax} as follows: we find a sequence $(w_n)_n$ in $H^1(\R^d)$ satisfying (after a possible rescaling) $\|w_n\|_{L^2(\R^d)}=1$, $w_n=0$ a.e. in $\Omega_n^c$ with $|\Omega_n|\le C$ and such that any possible translation of $w_n$ weakly converges; in view of Lemma \ref{pro:bucvar} $(w_n)_n$ can not be bounded in $H^1(\R^d)$ and in particular gradients must be unbounded in $L^2(\R^d)$.

Now, to prove the existence of minimizers for Problem \eqref{eq:relaxminbuck}, we follow a strategy based on the concentration-compactness principle by P.-L. Lions (see \cite{lions84}) and inspired by \cite{AshBuc2003} (a similar argument has been reprised in \cite{sto2021}). We only have to be careful with the choice of suitable $\varepsilon$-eigenfunctions and to preserve the perimeter constraint. 
For another result of spectral shape optimization under perimeter constraint by using a concentration-compactness argument see \cite{bbh}, where the same technique applies in the minimization of the second Dirichlet-Laplace eigenvalue.

The main result of this section is the following.

\begin{theorem}\label{teo:relax}
Problem \eqref{eq:relaxminbuck} admits a measurable solution $\hat{\Omega}\subset\R^d$ with $P(\hat{\Omega})=p$.
\begin{proof}
First of all, we show that every solution $\hat{\Omega}\subset	\R^d$ has maximal perimeter. Otherwise, if $P(\hat{\Omega})<p$, the dilated set 
$$\hat{\Omega}':=\left(\frac{p}{P(\hat{\Omega})}\right)^{\frac{1}{d-1}}\hat{\Omega}$$
satisfies $P(\hat{\Omega}')=p$ and, in view of the decreasing monotonicity and the scaling property of $\tilde{\Lambda}_1$, it holds $\tilde{\Lambda}_1(\hat{\Omega}')<\tilde{\Lambda}_1(\hat{\Omega})$, leading to a contradiction with the minimality of $\hat{\Omega}$.

Now we prove the existence of a minimizer for Problem \eqref{eq:relaxminbuck}. Let $(\Omega_n)_n\subset\tilde{\mathcal{A}}_p$ be a minimizing sequence for Problem \eqref{eq:relaxminbuck} and let $u_n\in\tilde{H}_0^2(\Omega_n)$ a corresponding sequence of normalized weak $1/n$-eigenfunctions, namely we have
\begin{gather}
\int_{\R^d}|\nabla u_n|^2\:dx=1,\notag\\
 \tilde{\Lambda}_1(\Omega_n)\le\int_{\R^d}(\Delta u_n)^2\:dx<\tilde{\Lambda}_1(\Omega_n)+\frac{1}{n},\label{eq:minimizingsequence}\\
\inf_{\Omega\in\tilde{\mathcal{A}}_p}\tilde{\Lambda}_1(\Omega)=\lim_{n\to+\infty}\tilde{\Lambda}_1(\Omega_n)=\lim_{n\to+\infty}\int_{\R^d}(\Delta u_n)^2\:dx.\notag
\end{gather}
We infer
\begin{align*}
\|u_n\|_{L^2(\R^d)}&=\|u_n\|_{L^2(\Omega_n)}\le|\Omega_n|^{\frac{2^*-2}{2^*\cdot 2}}\|u_n\|_{L^{2^*}(\Omega_n)}\le C\|u_n\|_{L^{2^*}(\R^d)}\le C'\|\nabla u_n\|_{L^2(\R^d)}\\
&=C'\|\nabla u_n\|_{L^2(\Omega_n)}=C',
\end{align*}
where we used first the inclusion $L^{2^*}(\Omega_n)\subset L^{2}(\Omega_n)$ and then the Sobolev-Gagliardo-Nirenberg inequality since $d>2$; all the constants are independent of $n$ since they depend only on the dimension $d$ and on $|\Omega_n|$, that is uniformly bounded. We remark that we do not use directly the Poincaré inequality to show the uniform bound on $\|u_n\|_{L^2(\Omega_n)}$ since $\Omega_n$ can possibly be unbounded in any direction and, even if $\Omega_n$ is bounded, the function $u_n$ could be outside $H^1_0(\Omega_n)$. Moreover, an integration by parts yields
\begin{align*}
1=\int_{\R^d}|\nabla u_n|^2\:dx=-\int_{\R^d} u_n\Delta u_n\:dx\le C'\left(\int_{\R^d}(\Delta u_n)^2\:dx\right)^{1/2},
\end{align*}
so $\int_{\R^d}(\Delta u_n)^2\:dx$ is also bounded from below away from zero; this implies that the infimum of Problem \eqref{eq:relaxminbuck} is strictly positive. Analogously, also $\|u_n\|_{L^2(\R^d)}$ is larger than a positive constant and this avoids the degeneracy of the eigenfunctions $u_n$ in $L^2(\R^d)$.

As already highlighted, due to the lack of a bounded design region $D\subset\subset\R^d$, we apply the concentration-compactness Lemma \cite[Lemma I.1]{lions84} to the sequence $(|\nabla u_n|^2)_n$:\\\\
There exists a subsequence $(u_{n_k})_k\subset H^2(\R^d)$ such that one of the three following situations occurs.
\begin{itemize}
\item[(i)] \emph{Compactness.} There exists a sequence of points $(y_k)_k\subset\R^d$ such that
\begin{equation}\label{eq:com}
\forall\varepsilon>0\quad\exists R>0\quad\text{s.t.}\quad\int_{B_R(y_k)}|\nabla u_{n_k}|^2\:dx\ge 1-\varepsilon.
\end{equation}
\item[(ii)] \emph{Vanishing.} For every $R>0$
\begin{equation}\label{eq:van}
\lim_{k\to+\infty}\left(\sup_{y\in\R^d}\int_{B_R(y)}|\nabla u_{n_k}|^2\:dx\right)=0.
\end{equation}
\item[(iii)] \emph{Dichotomy.} There exists $\alpha\in]0,1[$ such that, for every $\varepsilon>0$, there exist two bounded sequences $(u^1_k)_k,(u^2_k)_k\subset H^2(\R^d)$ such that
\begin{gather}
\lim_{k\to+\infty}\text{dist}(\text{supp}(u_k^1),\text{supp}(u_k^2))=+\infty,\label{eq:dic1}\\
\lim_{k\to+\infty}\int_{\R^d}|\nabla u^1_{k}|^2\:dx=\alpha,\quad \lim_{k\to+\infty}\int_{\R^d}|\nabla u^2_{k}|^2\:dx=1-\alpha,\label{eq:dic2}\\
\lim_{k\to+\infty}\int_{\R^d}\left[|\nabla u_{n_k}|^2-(|\nabla u^1_{k}|^2+|\nabla u^2_{k}|^2)\right]\:dx\le\varepsilon,\label{eq:dic3}\\
\liminf_{k\to+\infty}\int_{\R^d}\left[(\Delta u_{n_k})^2-((\Delta u^1_{k})^2+(\Delta u^2_{k})^2)\right]\:dx\ge 0.\label{eq:dic4}
\end{gather}
\end{itemize}
Let us show that only compactness does occur.
\begin{itemize}
\item \emph{Vanishing does not occur.} Let us assume that vanishing occurs. Then every possible translation of any partial derivative $\frac{\partial u_{n_k}}{\partial x_j}$ weakly converges to 0 in $L^2(\R^d)$. Indeed, in view of \eqref{eq:van}, for any $\phi\in C^\infty_c(\R^d)$, with $\text{supp}(\phi)$ contained in some closed ball $\overline{B_R(y)}$, we have
$$\left|\int_{\R^d}\frac{\partial u_{n_k}}{\partial x_j}\phi\:dx\right|\le\left(\int_{B_R(y)}|\nabla u_{n_k}|^2\:dx\right)^{1/2}\left(\int_{B_R(y)}\phi^2\:dx\right)^{1/2}\to 0.$$
Now, since $\int_{\R^d}|\nabla u_{n_k}|^2\:dx=1$, there exists (up to permutations) $l\in\left\{1,\ldots,d\right\}$ such that
$$\int_{\R^d}\left(\frac{\partial u_{n_k}}{\partial x_l}\right)^2dx\ge\frac{1}{d}.$$
Since $u_{n_k}$ belongs to $H^2(\R^d)$, by two integration by parts we get, for any $i,h\in\left\{1,\ldots,d\right\}$,
$$\int_{\R^d}\frac{\partial^2 u_{n_k}}{\partial^2 x_i}\frac{\partial^2 u_{n_k}}{\partial^2 x_h}\:dx=\int_{\R^d}\left(\frac{\partial^2 u_{n_k}}{\partial x_i\partial x_h}\right)^2dx.$$
Then
\begin{align*}
\int_{\R^d}(\Delta u_{n_k})^2\:dx&=\sum_{i,h=1}^d\int_{\R^d}\left(\frac{\partial^2 u_{n_k}}{\partial x_i\partial x_h}\right)^2dx\\
&\ge\sum_{i=1}^d\int_{\R^d}\left(\frac{\partial^2 u_{n_k}}{\partial x_i\partial x_l}\right)^2dx=\int_{\R^d}\left|\nabla\left(\frac{\partial u_{n_k}}{\partial x_l}\right)\right|^2dx.
\end{align*}
We thus obtain, by recalling \eqref{eq:minimizingsequence}
\begin{equation}\label{eq:absvanish}
\tilde{\Lambda}_1(\Omega_{n_k})>\int_{\R^d}(\Delta u_{n_k})^2\:dx-\frac{1}{n_k}\ge\frac{1}{d}\frac{\displaystyle\int_{\R^d}\left|\nabla\left(\frac{\partial u_{n_k}}{\partial x_l}\right)\right|^2dx}{\displaystyle\int_{\R^d}\left(\frac{\partial u_{n_k}}{\partial x_l}\right)^2dx}-\frac{1}{n_k}.
\end{equation}
We now apply Lemma \ref{pro:bucvar} to the sequence $\left(\frac{\partial u_{n_k}}{\partial x_l}\right)_k$. Any translation of $\frac{\partial u_{n_k}}{\partial x_l}$ weakly converges to 0 in $L^2(\R^d)$ and in $H^1(\R^d)$ as well (every translation of $u_{n_k}$ is bounded in $H^2(\R^d)$, so every translation $\frac{\partial u_{n_k}}{\partial x_l}$ is weakly convergent in $H^1(\R^d)$). In view of Lemma \ref{pro:bucvar}, $\left(\frac{\partial u_{n_k}}{\partial x_l}\right)_k$ can not be bounded in $H^1(\R^d)$ and we get
$$\int_{\R^d}\left|\nabla\left(\frac{\partial u_{n_k}}{\partial x_l}\right)\right|^2dx\to+\infty,$$
that is in contradiction with \eqref{eq:absvanish}, since $(\tilde{\Lambda}_1(\Omega_{n_k}))_k$ is a minimizing sequence for Problem \eqref{eq:relaxminbuck}.
\item \emph{Dichotomy does not occur.} Let us suppose that dichotomy occurs. Let $\alpha\in]0,1[$ as in the statement of the dichotomy case (iii) and let $\varepsilon>0$. The sequences $(u_k^1)_k,(u_k^2)_k\in H^2(\R^d)$ can be chosen as follows, see \cite{lions84} I.1 and \cite{AshBuc2003}. Let $\phi\in C^\infty_c(\R^d;[0,1])$ such that $\phi\equiv 1$ in $B_1(0)$ and $\phi\equiv0$ in $\R^d\setminus B_2(0)$. Let $(r_k)_k,(\rho_k)_k\subset\R_+$ two diverging sequences and define for any $x\in\R^d$
$$u_k^1(x):=\phi\left(\frac{x}{r_k}\right)u_{n_k}(x),\quad u_k^2(x):=\left(1-\phi\left(\frac{x}{\rho_k r_k}\right)\right)u_{n_k}(x).$$
Notice that
$$\text{supp}(u_k^1)\subseteq\overline{\Omega_{n_k}\cap B_{2r_k}(0)},\quad \text{supp}(u_k^2)\subseteq\overline{\Omega_{n_k}}\setminus B_{\rho_k r_k}(0)$$
(so, that choice satisfies \eqref{eq:dic1}).

In view of the previous choice, by using \eqref{eq:dic2}, \eqref{eq:dic3}, \eqref{eq:dic4} and the inequality
\begin{equation}\label{eq:rapporto}
\frac{a_1+a_2}{b_1+b_2}\ge\min\left\{\frac{a_1}{b_1},\frac{a_2}{b_2}\right\}\quad\forall a_1,a_2,b_1,b_2>0,
\end{equation}
we have (possibly switching $u_k^1$ and $u_k^2$)
\begin{equation}
\label{eq:dicabs1}
\begin{split}
\inf_{\Omega\in\tilde{\mathcal{A}}_p}\tilde{\Lambda}_1(\Omega)&=\lim_{k\to+\infty}\frac{\displaystyle\int_{\R^d}(\Delta u_{n_k})^2\:dx}{\displaystyle\int_{\R^d}|\nabla u_{n_k}|^2\:dx}\ge\limsup_{k\to+\infty}\frac{\displaystyle\int_{\R^d}\left[(\Delta u_k^1)^2+(\Delta u_k^2)^2\right]\:dx}{\displaystyle\varepsilon+\int_{\R^d}\left(|\nabla u_k^1|^2+|\nabla u_k^2|^2\right)\:dx}\\
&\ge\limsup_{k\to+\infty}\frac{\displaystyle\int_{\R^d}(\Delta u_k^1)^2\:dx}{\displaystyle\varepsilon+\int_{\R^d}|\nabla u_k^1|^2\:dx}=\limsup_{k\to+\infty}\frac{\displaystyle\int_{\R^d}(\Delta u_k^1)^2\:dx}{\displaystyle\int_{\R^d}|\nabla u_k^1|^2\:dx}\cdot\frac{\displaystyle\int_{\R^d}|\nabla u_k^1|^2\:dx}{\displaystyle\varepsilon+\int_{\R^d}|\nabla u_k^1|^2\:dx}\\
&=\frac{\alpha}{\varepsilon+\alpha}\limsup_{k\to+\infty}\frac{\displaystyle\int_{\R^d}(\Delta u_k^1)^2\:dx}{\displaystyle\int_{\R^d}|\nabla u_k^1|^2\:dx}\ge\frac{\alpha}{\varepsilon+\alpha}\limsup_{k\to+\infty}\tilde{\Lambda}_1(\Omega_{n_k}\cap B_{2r_k}(0)).
\end{split}
\end{equation}
Now, let us define
$$t_k:=\left(\frac{P(\Omega_{n_k})}{P(\Omega_{n_k}\cap B_{2r_k}(0))}\right)^\frac{1}{d-1}.$$
Clearly, $t_k\ge 1$; by using the scaling property of the perimeter we have
\begin{align*}
P\left(t_k(\Omega_{n_k}\cap B_{2r_k}(0))\right)&=t_k^{d-1}P(\Omega_{n_k}\cap B_{2r_k}(0))\\
&=\frac{P(\Omega_{n_k})}{P(\Omega_{n_k}\cap B_{2r_k}(0))}P(\Omega_{n_k}\cap B_{2r_k}(0))=P(\Omega_{n_k})\le p,
\end{align*}
i.e. the dilated set $t_k(\Omega_{n_k}\cap B_{2r_k}(0))$ is admissible for Problem \eqref{eq:relaxminbuck}.

Moreover, in view of the scaling property of the weak eigenvalues, it holds
$$\tilde{\Lambda}_1(\Omega_{n_k}\cap B_{2r_k}(0))=t_k^2\tilde{\Lambda}_1(t_k(\Omega_{n_k}\cap B_{2r_k}(0))).$$
By using this equality in \eqref{eq:dicabs1} we get
\begin{equation}\label{eq:dicabs2}
\begin{split}
\inf_{\Omega\in\tilde{\mathcal{A}}_p}\tilde{\Lambda}_1(\Omega)&\ge\frac{\alpha}{\varepsilon+\alpha}\limsup_{k\to+\infty}t^2_k\tilde{\Lambda}_1(t_k(\Omega_{n_k}\cap B_{2r_k}(0)))\\
&\ge\frac{\alpha}{\varepsilon+\alpha}\inf_{\Omega\in\tilde{\mathcal{A}}_p}\tilde{\Lambda}_1(\Omega)\limsup_{k\to+\infty}t^2_k.
\end{split}
\end{equation}
We claim that
\begin{equation}\label{eq:delta}
\limsup_{k\to+\infty}t^2_k=\delta> 1.
\end{equation}
We argue by contradiction. Let us suppose that $\displaystyle\limsup_{k\to+\infty}t^2_k=1$ and let us denote by $(t_{k_j})_j$ a subsequence of $(t_k)_k$ such that
$$\lim_{j\to+\infty}t^2_{k_j}=\limsup_{k\to+\infty}t^2_k=1.$$
We thus have
$$\lim_{j\to+\infty}P(\Omega_{n_{k_j}})=\lim_{j\to+\infty}P(\Omega_{n_{k_j}}\cap B_{2r_{k_j}}(0)).$$
This implies, in view of \eqref{eq:dic1}, that
$$\lim_{j\to+\infty}P(\Omega_{n_{k_j}}\setminus B_{\rho_kr_{k_j}}(0))=0$$
and so $|\Omega_{n_{k_j}}\setminus B_{\rho_kr_{k_j}}(0)|\to 0$. But this is a contradiction, since it would imply $\tilde{\Lambda}_1(\Omega_{n_{k_j}}\setminus B_{\rho_kr_{k_j}}(0))\to+\infty$, that is impossible in view of the estimate
$$\limsup_{k\to+\infty}\tilde{\Lambda}_1(\Omega_{n_k}\setminus B_{\rho_kr_k}(0))\le\limsup_{k\to+\infty}\frac{\displaystyle\int_{\R^d}(\Delta u_k^2)^2\:dx}{\displaystyle\int_{\R^d}|\nabla u_k^2|^2\:dx}\le\frac{C}{1-\alpha},$$
where we applied the second limit in \eqref{eq:dic2} and the fact that $\displaystyle\int_{\R^d}(\Delta u_k^2)^2\:dx$ is uniformly bounded by a positive constant in view of \eqref{eq:dic4}.

We conclude that \eqref{eq:delta} is true.

By \eqref{eq:dicabs2} we obtain
\begin{equation*}
\inf_{\Omega\in\tilde{\mathcal{A}}_p}\tilde{\Lambda}_1(\Omega)\ge\frac{\alpha}{\varepsilon+\alpha}\delta\inf_{\Omega\in\tilde{\mathcal{A}}_p}\tilde{\Lambda}_1(\Omega)
\end{equation*}
and then, in view of the arbitrariness of $\varepsilon>0$, we infer 
$$\inf_{\Omega\in\tilde{\mathcal{A}}_p}\tilde{\Lambda}_1(\Omega)\ge\delta\inf_{\Omega\in\tilde{\mathcal{A}}_p}\tilde{\Lambda}_1(\Omega)>\inf_{\Omega\in\tilde{\mathcal{A}}_p}\tilde{\Lambda}_1(\Omega),$$
a contradiction.
\end{itemize}
Since neither vanishing nor dichotomy occurs, we conclude that compactness takes place.

Then, there exists a sequence $(y_k)_k\subset\R^d$ such that, up to subsequences,
$$\|u_{n_k}(\cdot+y_k)\|_{H^2(\R^d)}\le C$$
and there exists $u\in H^2(\R^d)$
$$u_{n_k}(\cdot+y_k)\rightharpoonup u\ \text{weakly in $H^2(\R^d)$},\quad \|\nabla u\|_{L^2(\R^d)}=1.$$
The equality $\|\nabla u\|_{L^2(\R^d)}=1$ comes from \eqref{eq:com}, the arbitrariness of $\varepsilon>0$ and the weak lower semicontinuity of the $L^2$-norm of the gradient:
$$1-\varepsilon\le\|\nabla u\|_{L^2(\R^d)}\le\liminf_{k\to+\infty}\|\nabla u_{n_k}(\cdot+y_k)\|_{L^2(\R^d)}=\liminf_{k\to+\infty}\|\nabla u_{n_k}\|_{L^2(\R^d)}=1,$$
Then, since $u_{n_k}(\cdot+y_k)\to u$ strongly in $L^2(\R^d)$ and
$$\lim_{k\to+\infty}\|\nabla u_{n_k}(\cdot+y_k)\|_{L^2(\R^d)}=1=\|\nabla u\|_{L^2(\R^d)},$$
we deduce that $u_{n_k}(\cdot+y_k)\to u$ strongly in $H^1(\R^d)$.

We now prove that there exists a measurable set $\hat{\Omega}\subset\R^d$ such that
$$|\hat{\Omega}|<\infty,\quad P(\hat{\Omega})\le p,\quad u=0\ \text{a.e. in $\hat{\Omega}^c$},$$
in order to use $u\in\tilde{H}^2_0(\hat{\Omega})$ as a test function for $\tilde{\Lambda}_1(\hat{\Omega})$.

For any $j\in\N$, let us define the set $\hat{\Omega}^{(j)}$ as the limit in measure of the sequence $(\Omega_{n_k}-y_k)_k$ in $B_j(0)$. Notice that
$$P(\hat{\Omega}^{(j)};B_j(0))\le\liminf_{k\to+\infty}P(\Omega_{n_k}-y_k;B_j(0))\le p.$$
Let us define
$$\hat{\Omega}:=\bigcup_{j\in\N}\hat{\Omega}^{(j)}.$$
We remark that the above union is increasing and that
$$P(\hat{\Omega};B_j(0))=P(\hat{\Omega}^{(j)};B_j(0))\quad\forall j\in\N;$$
then
$$P(\hat{\Omega})=\lim_{j\to+\infty}P(\hat{\Omega};B_j(0))=\lim_{j\to+\infty}P(\hat{\Omega}^{(j)};B_j(0))\le p$$
and $|\hat{\Omega}|<\infty$, i.e. $\hat{\Omega}\in\tilde{\mathcal{A}}_p$ is an admissible set for Problem \eqref{eq:relaxminbuck}.

It remains to prove that $u=0$ a.e. in $\hat{\Omega}^c$.

Let us fix $\varepsilon>0$. Since $u_{n_k}(\cdot+y_k)\to u$ strongly in $L^2(\R^d)$ and, for any $j\in\N$, $\left|\left(\Omega_{n_k}-y_k\cap B_j(0)\right)\setminus\hat{\Omega}^{(j)}\right|\to0$, then, for $k\in\N$ sufficiently large, it holds
$$\int_{(\Omega_{n_k}-y_k\cap B_j(0))\setminus\hat{\Omega}^{(j)}}u^2_{n_k}(\cdot+y_k)\:dx<\varepsilon.$$
Therefore
\begin{align*}
\int_{B_j(0)\setminus\hat{\Omega}}u^2\:dx&=\int_{B_j(0)\setminus\hat{\Omega}^{(j)}}u^2\:dx\le\liminf_{k\to+\infty}\int_{B_j(0)\setminus\hat{\Omega}^{(j)}}u^2_{n_k}(\cdot+y_k)\:dx\\
&=\liminf_{k\to+\infty}\int_{(\Omega_{n_k}-y_k\cap B_j(0))\setminus\hat{\Omega}^{(j)}}u^2_{n_k}(\cdot+y_k)\:dx\le\varepsilon.
\end{align*}
We thus conclude that
$$\int_{B_j(0)\setminus\hat{\Omega}}u^2\:dx=0\quad\forall j\in\N$$
and then, by taking the supremum over $j\in\N$,
$$\int_{\R^d\setminus\hat{\Omega}}u^2\:dx=0,$$
which proves that $u=0$ a.e. in $\hat{\Omega}^c$.

Then, since $u_{n_k}(\cdot+y_k)\to u$ strongly in $H^1(\R^d)$ as proved above, recalling \eqref{eq:minimizingsequence} we finally have
\begin{align*}
\tilde{\Lambda}_1(\hat{\Omega})\le\frac{\displaystyle\int_{\R^d}(\Delta u)^2\:dx}{\displaystyle\int_{\R^d}|\nabla u|^2\:dx}\le\frac{\displaystyle\liminf_{k\to+\infty}\int_{\R^d}(\Delta u_{n_k})^2\:dx}{\displaystyle\lim_{k\to+\infty}\int_{\R^d}|\nabla u_{n_k}|^2\:dx}=\liminf_{k\to+\infty}\frac{\displaystyle\int_{\R^d}(\Delta u_{n_k})^2\:dx}{\displaystyle\int_{\R^d}|\nabla u_{n_k}|^2\:dx}=\inf_{\Omega\in\tilde{\mathcal{A}}_p}\tilde{\Lambda}_1(\Omega),
\end{align*}
concluding the theorem.
\end{proof}
\end{theorem}


The choice of the weaker framework of measurable sets leads us to choose a different approach to the connectedness of the admissible sets: indeed, given $\Omega\in\tilde{\mathcal{A}}_p$, for every `cracked version' $\Omega'$ of $\Omega$ one has $\tilde{\Lambda}_1(\Omega')=\tilde{\Lambda}_1(\Omega)$; in particular, the same equality holds if we consider cracks splitting the set in two connected components lying at zero distance, e.g. if $\Omega\subset\R^3$ is a ball and $\Omega'$ is obtained by removing from $\Omega$ a maximal disk. In other words, it does not make sense to talk about connected components in the canonical sense, even for open sets. We point out that the only connected components that can be treated in a classical way are those lying at positive distance (since $\tilde{\Lambda}_1(\Omega)$ is not invariant under relative translations of connected components, unless they remain at positive distance). In view of that, we need to introduce the following definition. 

\begin{definition}[well separated sets]\label{def:welsep}
Let $A,B\subseteq\mathbb{R}^d$. We say that $A$ and $B$ are well separated if there exist two open sets $E_A,E_B$ and two negligible sets $N_A\subset A$, $N_B\subset B$ such that
$$(A\setminus N_A)\subseteq E_A,\quad (B\setminus N_B)\subseteq E_B,\quad \text{dist}(E_A,E_B)>0.$$
\end{definition}

As we expected from the concentration-compactness argument in Theorem \ref{teo:relax}, it can not happen that the optimal set is split in two or more well separated set of positive measure (dichotomy does not occur). We now show that every optimal set for Problem \eqref{eq:relaxminbuck} is `connected' in a generalized sense. The proof follows a standard argument for counting the connected components in shape optimization, with the only difference that in our framework $\tilde{\Lambda}_1$ is an infimum and not a minimum, in general.

\begin{proposition}[generalized connectedness of the minimizers]\label{pro:connect}
Every solution $\Omega$ of Problem \eqref{eq:relaxminbuck} is `connected' in the sense of Definition \ref{def:welsep}, i.e. if $\Omega$ is union of well separated sets, only one has positive Lebesgue measure.
\begin{proof}
Let us suppose that $\Omega=\Omega_1\cup\Omega_2$, where $\Omega_1$ and $\Omega_2$ are well separated sets of positive measure. Let $\varepsilon>0$ and let $u^\varepsilon\in\tilde{H}^2_0(\Omega)$ an $\varepsilon$-eigenfunction for $\tilde{\Lambda}_1(\Omega)$. Let us define
$$u^\varepsilon_1:=\begin{cases}
u^\varepsilon &\text{in $\Omega_1$}\\
0 &\text{in $\Omega_1^c$}
\end{cases}\quad,\quad
u^\varepsilon_2:=\begin{cases}
u^\varepsilon &\text{in $\Omega_2$}\\
0 &\text{in $\Omega_2^c$}
\end{cases}.
$$
Since $\Omega_1$ and $\Omega_2$ lie at positive distance, then $u^\varepsilon_1\in \tilde{H}^2_0(\Omega_1)$ and $u^\varepsilon_2\in \tilde{H}^2_0(\Omega_2)$ and so they can be used as test functions for $\tilde{\Lambda}_1(\Omega_1)$ and $\tilde{\Lambda}_1(\Omega_2)$ respectively. In view of the choice of $u^\varepsilon$, by \eqref{eq:uepsilon} we have
\begin{align*}
\tilde{\Lambda}_1(\Omega)+\varepsilon&>\frac{\displaystyle\int_{\Omega_1}(\Delta u_1^\varepsilon)^2\:dx+\int_{\Omega_2}(\Delta u_2^\varepsilon)^2\:dx}{\displaystyle\int_{\Omega_1}|\nabla u_1^\varepsilon|^2\:dx+\int_{\Omega_2}|\nabla u_2^\varepsilon|^2\:dx}\\
&\ge\min\left\{\frac{\displaystyle\int_{\Omega_1}(\Delta u_1^\varepsilon)^2\:dx}{\displaystyle\int_{\Omega_1}|\nabla u_1^\varepsilon|^2\:dx},\frac{\displaystyle\int_{\Omega_2}(\Delta u_2^\varepsilon)^2\:dx}{\displaystyle\int_{\Omega_2}|\nabla u_2^\varepsilon|^2\:dx}\right\}\ge\min\left\{\tilde{\Lambda}_1(\Omega_1),\tilde{\Lambda}_1(\Omega_2)\right\}
\end{align*}
where we used inequality \eqref{eq:rapporto}. In view of the arbitrariness of $\varepsilon>0$, either $\tilde{\Lambda}_1(\Omega_1)\le\tilde{\Lambda}_1(\Omega)$ or $\tilde{\Lambda}_1(\Omega_2)\le\tilde{\Lambda}_1(\Omega)$. Let us suppose the first case; then, dilating $\Omega_1$ by a factor $t>1$ in such a way that $P(t\Omega_1)=p$, we get $\tilde{\Lambda}_1(t\Omega_2)<\tilde{\Lambda}_1(\Omega)$ contradicting the minimality of $\Omega$.
\end{proof}
\end{proposition}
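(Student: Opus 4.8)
The plan is to argue by contradiction, along the classical scheme for ruling out disconnected minimizers, with the one proviso that $\tilde{\Lambda}_1$ is an infimum rather than (in general) a minimum, so the genuine eigenfunction has to be replaced by an $\eps$-eigenfunction. Assume $\Omega=\Omega_1\cup\Omega_2$ with $\Omega_1,\Omega_2$ well separated in the sense of Definition~\ref{def:welsep} and $|\Omega_1|,|\Omega_2|>0$; let $E_1,E_2$ be the associated open sets, so that $\Omega_i\subseteq E_i$ up to a negligible set and $\mathrm{dist}(E_1,E_2)>0$. First I would record the two geometric consequences of this hypothesis. Since $\overline{E_1}$ and $\overline{E_2}$ are still at positive distance, $\Omega_1$ and $\Omega_2$ are essentially disjoint and their essential boundaries lie in disjoint regions, whence $P(\Omega)=P(\Omega_1)+P(\Omega_2)$; together with the isoperimetric inequality $P(\Omega_2)\ge c_d|\Omega_2|^{(d-1)/d}>0$ and with $P(\Omega)\le p$, this gives the \emph{strict} bound $P(\Omega_1)<p$ (and symmetrically $P(\Omega_2)<p$). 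This strictness is exactly what will turn the final rescaling into a genuine improvement.

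Next, fix $\eps>0$ and take an $\eps$-eigenfunction $u^\eps\in\tilde{H}^2_0(\Omega)$ as in \eqref{eq:uepsilon}; note that $u^\eps=0$ a.e. outside $E_1\cup E_2$, because $\Omega\subseteq E_1\cup E_2$ up to a negligible set. Choose $\varphi\in C^\infty(\R^d;[0,1])$ with $\varphi\equiv1$ on $E_1$, $\varphi\equiv0$ on $E_2$ and bounded derivatives (possible since $\mathrm{dist}(E_1,E_2)>0$), and set $u^\eps_1:=\varphi u^\eps$, $u^\eps_2:=(1-\varphi)u^\eps$, so that $u^\eps=u^\eps_1+u^\eps_2$ and $u^\eps_i=u^\eps\chi_{\Omega_i}$ a.e. Both functions lie in $H^2(\R^d)$ and, after checking supports, in $\tilde{H}^2_0(\Omega_1)$ and $\tilde{H}^2_0(\Omega_2)$ respectively. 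The crucial observation is that $\nabla\varphi$ and $\Delta\varphi$ are supported in $(E_1\cup E_2)^c$, where $u^\eps=0$ a.e. and hence, by Proposition~\ref{teo:evans4.4} applied to $u^\eps$ and then to each $\partial_i u^\eps$, also $\nabla u^\eps=0$ and $D^2 u^\eps=0$ a.e.; consequently the cross terms $\int\nabla u^\eps_1\cdot\nabla u^\eps_2$ and $\int\Delta u^\eps_1\,\Delta u^\eps_2$ vanish, so both the numerator and the denominator of $R_\Omega(u^\eps)$ split additively over $\Omega_1$ and $\Omega_2$. Applying the elementary inequality \eqref{eq:rapporto} (and treating separately the degenerate case in which one of the two gradient integrals is zero, which forces the corresponding $u^\eps_i$ to vanish) I obtain
$$\tilde{\Lambda}_1(\Omega)+\eps>R_\Omega(u^\eps)\ge\min\left\{R_{\Omega_1}(u^\eps_1),R_{\Omega_2}(u^\eps_2)\right\}\ge\min\left\{\tilde{\Lambda}_1(\Omega_1),\tilde{\Lambda}_1(\Omega_2)\right\},$$
and letting $\eps\to0^+$ yields $\tilde{\Lambda}_1(\Omega)\ge\min\{\tilde{\Lambda}_1(\Omega_1),\tilde{\Lambda}_1(\Omega_2)\}$; say the minimum is attained by $\Omega_1$.

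Finally, since $P(\Omega_1)<p$, set $t:=\bigl(p/P(\Omega_1)\bigr)^{1/(d-1)}>1$; then $P(t\Omega_1)=t^{d-1}P(\Omega_1)=p$, so $t\Omega_1\in\tilde{\mathcal{A}}_p$, and by the scaling property of the weak eigenvalues
$$\tilde{\Lambda}_1(t\Omega_1)=t^{-2}\tilde{\Lambda}_1(\Omega_1)<\tilde{\Lambda}_1(\Omega_1)\le\tilde{\Lambda}_1(\Omega),$$
contradicting the minimality of $\Omega$ (here I use $\tilde{\Lambda}_1(\Omega_1)>0$, which is part of the lower bound for $\tilde{\Lambda}_1$ on sets of finite measure obtained in the proof of Theorem~\ref{teo:relax}). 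I expect the main obstacle to be the bookkeeping in the middle paragraph --- verifying that the pieces $u^\eps_i$ actually belong to $\tilde{H}^2_0(\Omega_i)$ and that no interaction term survives in the Dirichlet and bi-Laplacian energies --- together with the additivity $P(\Omega)=P(\Omega_1)+P(\Omega_2)$; both reduce, in the end, to the single fact that $E_1$ and $E_2$ lie at positive distance, which is the whole content of the ``well separated'' hypothesis.
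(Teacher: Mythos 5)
Your proof is correct and follows essentially the same route as the paper's: split an $\varepsilon$-eigenfunction over the two well separated pieces, apply the elementary inequality \eqref{eq:rapporto} to conclude that one piece has weak eigenvalue at most $\tilde{\Lambda}_1(\Omega)$, and dilate that piece to saturate the perimeter constraint and obtain a contradiction. Your additional checks (the smooth cutoff guaranteeing $u^\varepsilon_i\in H^2(\R^d)$, the strict inequality $P(\Omega_1)<p$ via the isoperimetric inequality, and the positivity of $\tilde{\Lambda}_1(\Omega_1)$) are details the paper leaves implicit, but they do not change the argument.
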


The previous result about the generalized connectedness of the optimal measurable sets can be proven identically even if we replace the perimeter constraint with the measure constraint.

\medskip

Once we have assured the existence of optimal shapes in this weak setting, we would like to show that weak solutions are in fact open solutions. To this aim, we follow an approach proposed in \cite{deve}, introducing the following definition.

\begin{definition}[perimeter supersolution]
Let $\Omega\subset\R^d$ a set of finite perimeter. We say that $\Omega$ is a \emph{perimeter supersolution} if $|\Omega|<+\infty$ and, for every $\tilde{\Omega}\supset \Omega$ of finite perimeter, we have $P(\tilde{\Omega})\ge P(\Omega)$.
\end{definition}

The following result is immediate.

\begin{proposition}
If $\Omega\subset\R^d$ is a solution for Problem \eqref{eq:relaxminbuck}, then $\Omega$ is a perimeter supersolution.
\begin{proof}
Let $\tilde{\Omega}\subset\R^d$ be a set of finite perimeter such that $\tilde{\Omega}\supset\Omega$. In view of the decreasing  monotonicity of $\tilde{\Lambda}_1(\cdot)$, it holds $\tilde{\Lambda}_1(\tilde{\Omega})\le\tilde{\Lambda}_1(\Omega)$. On the other hand, by using the equivalent penalized version of Problem \eqref{eq:relaxminbuck}, i.e. Problem \eqref{eq:relaxpenalized}, in view of the optimality of $\Omega$ for some $\beta>0$ we obtain
$$\tilde{\Lambda}_1(\Omega)+\beta P(\Omega)\le \tilde{\Lambda}_1(\tilde{\Omega})+\beta P(\tilde{\Omega})\le\tilde{\Lambda}_1(\Omega)+\beta P(\tilde{\Omega}),$$
i.e. $P(\Omega)\le P(\tilde{\Omega})$.
\end{proof}
\end{proposition}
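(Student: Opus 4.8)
The plan is to deduce the statement directly from the penalized reformulation of the weak problem. By the very definition of perimeter supersolution two things are needed: that $|\Omega|<+\infty$, which is immediate because a solution $\Omega$ of Problem \eqref{eq:relaxminbuck} lies in $\tilde{\mathcal{A}}_p$ and hence has finite measure by construction; and that $P(\tilde\Omega)\ge P(\Omega)$ for every $\tilde\Omega\supset\Omega$ of finite perimeter.

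For the second, more substantial point I would fix a competitor $\tilde\Omega\supset\Omega$ of finite perimeter. By Remark \ref{rem:relaxequiv} the optimality of $\Omega$ for Problem \eqref{eq:relaxminbuck} means that, for a suitable $\beta>0$, the set $\Omega$ also minimizes the penalized functional $\tilde\Lambda_1(\cdot)+\beta P(\cdot)$ among measurable sets of finite measure. Testing this minimality against $\tilde\Omega$ gives
\[
\tilde\Lambda_1(\Omega)+\beta P(\Omega)\le\tilde\Lambda_1(\tilde\Omega)+\beta P(\tilde\Omega).
\]
Then I would invoke the decreasing monotonicity of $\tilde\Lambda_1$ with respect to set inclusion (item (ii) in the list of properties of the weak eigenvalues): since $\Omega\subset\tilde\Omega$, one has $\tilde\Lambda_1(\tilde\Omega)\le\tilde\Lambda_1(\Omega)$. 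Substituting this on the right-hand side and cancelling the common term $\tilde\Lambda_1(\Omega)$ leaves $\beta P(\Omega)\le\beta P(\tilde\Omega)$, i.e. $P(\Omega)\le P(\tilde\Omega)$, which is exactly the supersolution inequality.

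I do not anticipate a genuine obstacle here. The only point worth a moment of care is that the penalized reformulation is actually available, but this is precisely the content of Remark \ref{rem:relaxequiv}, whose proof parallels that of Remark \ref{rem:equiv} and relies only on the saturation $P(\Omega)=p$ (already established at the beginning of Theorem \ref{teo:relax}) together with the scaling laws for the perimeter and for $\tilde\Lambda_1$. If one preferred to avoid the penalized functional altogether, the same conclusion follows by a direct dilation argument: were some $\tilde\Omega\supset\Omega$ to satisfy $P(\tilde\Omega)<p=P(\Omega)$, then dilating $\tilde\Omega$ by the factor $t=(p/P(\tilde\Omega))^{1/(d-1)}>1$ would yield an admissible set with $\tilde\Lambda_1(t\tilde\Omega)=t^{-2}\tilde\Lambda_1(\tilde\Omega)\le t^{-2}\tilde\Lambda_1(\Omega)<\tilde\Lambda_1(\Omega)$, contradicting the minimality of $\Omega$; the strict inequality uses $t>1$ and the strict positivity of $\tilde\Lambda_1(\Omega)$, both provided by Theorem \ref{teo:relax}.
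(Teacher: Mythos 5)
Your argument is essentially identical to the paper's: both invoke the equivalent penalized formulation from Remark \ref{rem:relaxequiv}, test the penalized minimality of $\Omega$ against the competitor $\tilde\Omega$, and then use the decreasing monotonicity of $\tilde\Lambda_1$ under set inclusion to cancel the eigenvalue terms and obtain $P(\Omega)\le P(\tilde\Omega)$. The only additions are the (correct, but routine) observation that $|\Omega|<\infty$ is automatic from $\Omega\in\tilde{\mathcal{A}}_p$, and the alternative dilation argument at the end, which is a pleasant self-contained variant but rests on exactly the same scaling identities that underlie the penalized reformulation.
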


Perimeter supersolutions enjoy good properties for our purposes; one of those is the following density estimate.

\begin{definition}[exterior density estimate]
Let $\Omega\subset\R^d$ a set of finite perimeter. We say that $\Omega$ \emph{satisfies an exterior density estimate} if there exists a positive dimensional constant $c=c(d)$ such that, for every $x\in\R^d$, one of the following situations occurs:
\begin{itemize}
\item[(i)] there exists $r>0$ such that $B_r(x)\subset\Omega$ a.e.;
\item[(ii)] for every $r>0$, it holds $|B_r(x)\cap\Omega^c|\ge c|B_r(x)|.$
\end{itemize}
\end{definition}

The next results link the previous density estimate with the perimeter supersolutions, ensuring that there exist open optimal shapes for Problem \eqref{eq:relaxminbuck} and that such an open solution is, in fact, a solution for Problem \eqref{eq:minbuck}. The proof of the following proposition is omitted, as it can be found in \cite[Lemma 4.4]{deve}.

\begin{proposition}
Let $\Omega\subset\R^d$ be a perimeter supersolution. Then, $\Omega$ satisfies an exterior density estimate. In particular, if $\Omega\subset\R^d$ is a solution of \eqref{eq:relaxminbuck}, then $\Omega$ satisfies an exterior density estimate.
\end{proposition}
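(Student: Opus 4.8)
The plan is to establish the estimate by the classical ``fill in the ball'' competitor argument for perimeter minimizers, using only the supersolution property. Fix $x\in\R^d$ and set $u(r):=|B_r(x)\cap\Omega^c|$ for $r>0$. Slicing with concentric spheres (coarea formula) shows that $u$ is absolutely continuous and non-decreasing, that $u(0^+)=0$, and that for a.e.\ $r>0$ one has $u'(r)=\mathcal{H}^{d-1}(\partial B_r(x)\cap\Omega^{(0)})$, where $\Omega^{(0)}$ is the set of points of Lebesgue density $0$ of $\Omega$. If $u(r_0)=0$ for some $r_0>0$, then $B_{r_0}(x)\subseteq\Omega$ up to a negligible set, which is alternative (i); hence from now on I assume $u(r)>0$ for every $r>0$ and aim at alternative (ii).

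Next I would use the competitor $\Omega\cup B_r(x)\supseteq\Omega$. For a.e.\ $r>0$ (those $r$ for which $\partial B_r(x)$ is $\mathcal{H}^{d-1}$-negligible for $\partial^*\Omega$ and for which the coarea identity for $u'(r)$ holds) the standard cut-and-paste formulas for sets of finite perimeter give
\[
P(\Omega\cup B_r(x))=P(\Omega;\R^d\setminus\overline{B_r(x)})+u'(r),\qquad P(\Omega^c\cap B_r(x))=P(\Omega;B_r(x))+u'(r).
\]
Since $\Omega$ is a perimeter supersolution and $\Omega\cup B_r(x)\supseteq\Omega$, we have $P(\Omega\cup B_r(x))\ge P(\Omega)=P(\Omega;\R^d\setminus\overline{B_r(x)})+P(\Omega;\overline{B_r(x)})\ge P(\Omega;\R^d\setminus\overline{B_r(x)})+P(\Omega;B_r(x))$; comparing with the first identity yields $P(\Omega;B_r(x))\le u'(r)$, and the second identity then gives $P(\Omega^c\cap B_r(x))\le 2\,u'(r)$ for a.e.\ $r>0$.

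Finally I would apply the isoperimetric inequality in $\R^d$ to the finite-measure set $\Omega^c\cap B_r(x)$: with $c_d$ the dimensional isoperimetric constant, $u(r)^{\frac{d-1}{d}}\le c_d\,P(\Omega^c\cap B_r(x))\le 2c_d\,u'(r)$ for a.e.\ $r>0$. Integrating this differential inequality on $[r_1,r]$ and letting $r_1\to0^+$ (using $u(0^+)=0$, that $u>0$ on $(0,\infty)$, and that $t\mapsto t^{1/d}$ is Lipschitz on $[\delta,\infty)$ for every $\delta>0$) yields $u(r)^{1/d}\ge r/(2c_d d)$, i.e.
\[
|B_r(x)\cap\Omega^c|\ \ge\ \Big(\tfrac{1}{2c_d d}\Big)^{\!d}r^d\ =\ c(d)\,|B_r(x)|\qquad\text{for every }r>0,
\]
with $c(d):=\omega_d^{-1}(2c_d d)^{-d}$ depending only on the dimension; this is alternative (ii). The ``in particular'' assertion is then immediate, because every solution of Problem~\eqref{eq:relaxminbuck} is a perimeter supersolution by the preceding proposition.

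The main obstacle I anticipate is not the analysis above (the supersolution inequality, the isoperimetric inequality and the ODE comparison are routine), but the careful justification, for a.e.\ $r>0$, of the coarea identity $u'(r)=\mathcal{H}^{d-1}(\partial B_r(x)\cap\Omega^{(0)})$ and of the two cut-and-paste formulas for $P(\Omega\cup B_r(x))$ and $P(\Omega^c\cap B_r(x))$. These are standard facts about the behaviour of a set of finite perimeter under slicing by concentric spheres (see Chapter~3 of \cite{AFP}), which I would quote rather than reprove; this is precisely why the paper defers the detailed statement to \cite[Lemma~4.4]{deve}.
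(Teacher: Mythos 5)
Your argument is correct and is precisely the ``fill in the ball'' proof of the result the paper cites, namely \cite[Lemma 4.4]{deve}: the competitor $\Omega\cup B_r(x)$ combined with the supersolution inequality gives $P(\Omega;B_r(x))\le u'(r)$, hence $P(\Omega^c\cap B_r(x))\le 2u'(r)$, and the isoperimetric inequality yields the differential inequality whose integration produces the dimensional lower bound. Since the paper deliberately omits the proof in favour of that reference, your write-up supplies exactly the omitted argument and nothing needs to be changed.
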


The following crucial result states that the measure theoretic interior $\Omega_1$ of a perimeter supersolution $\Omega$ is an open set and that the test space $\tilde{H}^2_0(\Omega)$ is, in fact, the classical Sobolev space $H^2_0(\Omega_1)$. The proof is based on \cite[Proposition 4.7]{deve}, where the authors show the equality between the spaces $\tilde{H}^1_0(\Omega)$ and $H^1_0(\Omega_1)$.

\begin{proposition}\label{pro:omega1}
Let $\Omega\subset\R^d$ a set of finite perimeter satisfying an exterior density estimate. Then, the set of the points of density 1 for $\Omega$
$$\Omega_1=\left\{x\in\R^d:\exists\:\lim_{r\to 0^+}\frac{|\Omega\cap B_r(x)|}{|B_r(x)|}=1\right\}$$
is open. In particular, for every perimeter supersolution $\Omega$, $\Omega_1$ is open.

Moreover, it holds $\tilde{H}^2_0(\Omega)=H^2_0(\Omega_1)$ and, in particular, $\tilde{\Lambda}_1(\Omega)=\tilde{\Lambda}_1(\Omega_1)=\Lambda_1(\Omega_1)$.
\end{proposition}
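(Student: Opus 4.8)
The plan is to mimic the scheme of \cite[Proposition 4.7]{deve}, where the corresponding statement is proved for the first-order spaces $\tilde H^1_0(\Omega)$ and $H^1_0(\Omega_1)$, and to adapt it to the second-order setting by running the capacitary argument simultaneously on $u$ and on each of its first derivatives. To begin with, by the Lebesgue density theorem $|\Omega\Delta\Omega_1|=0$, hence also $|\Omega^c\Delta\Omega_1^c|=0$. If $x\in\Omega_1$, alternative (ii) of the exterior density estimate is ruled out, since it would force $\limsup_{r\to0}|B_r(x)\cap\Omega^c|/|B_r(x)|\ge c(d)>0$ against $x$ being a density-$1$ point of $\Omega$; so alternative (i) holds, i.e. there is $r>0$ with $B_r(x)\subset\Omega$ up to a negligible set. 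Every point of $B_r(x)$ is then a density-$1$ point of $\Omega$, i.e. $B_r(x)\subseteq\Omega_1$, so $\Omega_1$ is open. Since, by the preceding proposition, every perimeter supersolution satisfies the exterior density estimate, its set of density-$1$ points is open, which is the ``in particular'' part.

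For the equality of the test spaces, observe first that $|\Omega\Delta\Omega_1|=0$ together with \eqref{eq:htilde20} gives $\tilde H^2_0(\Omega)=\tilde H^2_0(\Omega_1)$, so it suffices to prove $\tilde H^2_0(\Omega_1)=H^2_0(\Omega_1)$. The inclusion $H^2_0(\Omega_1)\subseteq\tilde H^2_0(\Omega_1)$ is the general one recalled in Section \ref{sec:d>2}. For the reverse inclusion I would take $u\in\tilde H^2_0(\Omega_1)$, i.e. $u\in H^2(\R^d)$ with $u=0$ a.e. on $\Omega_1^c$; Proposition \ref{teo:evans4.4} applied to $u\in W^{1,2}(\R^d)$ gives $\nabla u=0$ a.e. on $\{u=0\}$, which contains $\Omega_1^c$ up to a null set, so that $u$ and all the partial derivatives $\partial_i u$ vanish a.e. on $\Omega_1^c$. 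The goal is then to upgrade this to the vanishing in the capacitary sense demanded by Proposition \ref{teo:adams9.1.3} with $m=2$, namely $D^\alpha u=0$ on $\Omega_1^c$ in the $(2-|\alpha|,2)$-capacity sense for every $|\alpha|\le1$, which by that proposition is equivalent to $u\in H^2_0(\Omega_1)$.

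This last step is the heart of the proof and the point I expect to be delicate. For $x\in\Omega_1^c$, alternative (i) cannot occur (it would put $x$ into the open set $\Omega_1$), so $|B_r(x)\cap\Omega_1^c|=|B_r(x)\cap\Omega^c|\ge c(d)|B_r(x)|$ for every $r>0$. Let $v$ be either $u\in W^{2,2}(\R^d)$ or one of the $\partial_i u\in W^{1,2}(\R^d)$, say $v\in W^{k,2}(\R^d)$ with $k\in\{1,2\}$, and let $\bar v$ be its $(k,2)$-quasicontinuous representative; by the Lebesgue-point property of quasicontinuous representatives (see \cite{adams1999function}) one has $\tfrac{1}{|B_r(x)|}\int_{B_r(x)}|v-\bar v(x)|\,dy\to0$ for $(k,2)$-quasi every $x$. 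If $\bar v(x)\ne0$ at one such point $x\in\Omega_1^c$, then $|\{y\in B_r(x):|v(y)|\le|\bar v(x)|/2\}|=o(|B_r(x)|)$ as $r\to0$, which contradicts the density lower bound above together with $v=0$ a.e. on $\Omega_1^c$. Hence $\bar v=0$ $(k,2)$-quasi everywhere on $\Omega_1^c$; applying this with $k=2$ to $u$ and with $k=1$ to each $\partial_i u$, and then invoking Proposition \ref{teo:adams9.1.3}, we conclude $u\in H^2_0(\Omega_1)$. The subtlety to watch is that the order of the capacity has to match $|\alpha|$: the zeroth-order condition on $u$ must be read at the level of the second-order capacity, whereas \cite[Proposition 4.7]{deve} only needed the first-order one, so this part genuinely has to be reproved rather than quoted.

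Finally, once $\tilde H^2_0(\Omega)=\tilde H^2_0(\Omega_1)=H^2_0(\Omega_1)$ is known, the chain $\tilde\Lambda_1(\Omega)=\tilde\Lambda_1(\Omega_1)=\Lambda_1(\Omega_1)$ follows at once: the first identity comes from $|\Omega\Delta\Omega_1|=0$, which already gives $\tilde H^2_0(\Omega)=\tilde H^2_0(\Omega_1)$ and hence equality of the two Rayleigh infima; the second holds because for any $u$ in this common space the Rayleigh quotient has the same value whether one integrates over $\Omega_1$ or over all of $\R^d$, so the infimum defining $\tilde\Lambda_1(\Omega_1)$ coincides with the one defining $\Lambda_1(\Omega_1)$.
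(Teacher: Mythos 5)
Your proof is correct, and the first part (openness of $\Omega_1$, the identification $\tilde H^2_0(\Omega)=\tilde H^2_0(\Omega_1)$, and the final chain of equalities for the eigenvalues) coincides with the paper's argument. For the key inclusion $\tilde H^2_0(\Omega_1)\subseteq H^2_0(\Omega_1)$, however, you take a genuinely different route. The paper does not redo any capacitary analysis: it applies the first-order result \cite[Proposition 4.7]{deve} twice --- once to $u$ itself, obtaining $u\in H^1_0(\Omega_1)$ and hence (via Proposition \ref{teo:adams9.1.3} with $m=1$ and Proposition \ref{teo:evans4.4}) $u=0$ and $\nabla u=0$ on $\Omega_1^c$, and once to each derivative $D_ju\in\tilde H^1_0(\Omega_1)$, obtaining $D_ju\in H^1_0(\Omega_1)$ --- and then invokes Proposition \ref{teo:adams9.1.3} with $m=2$ to conclude. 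You instead re-prove the quasi-everywhere vanishing from scratch, combining the exterior density estimate at points of $\Omega_1^c$ with the Lebesgue-point property of $(k,2)$-quasicontinuous representatives, applied with $k=2$ to $u$ and $k=1$ to each $\partial_i u$. What the paper's route buys is brevity, since all the potential-theoretic work is outsourced to \cite{deve} and to the quoted characterization of $W^{m,p}_0$; what your route buys is self-containedness and, notably, an explicit treatment of the point you correctly single out as delicate: the zeroth-order condition on $u$ must be verified at the level of the $(2,2)$-capacity, which is strictly stronger than the $(1,2)$-level information that the first-order result delivers, and which the paper's ``everywhere'' formulation of Proposition \ref{teo:adams9.1.3} leaves implicit. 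Both arguments are sound; yours is arguably the more transparent justification of that step.
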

\begin{proof}
The fact that $\Omega_1$ is open follows from the exterior density estimate.

To show the equality $\tilde{H}^2_0(\Omega)=H^2_0(\Omega_1)$ it is sufficient to prove that $\tilde{H}^2_0(\Omega)\subseteq H^2_0(\Omega_1)$. Moreover, since $|\Omega\Delta\Omega_1|=0$, the equality $\tilde{H}^2_0(\Omega)=\tilde{H}^2_0(\Omega_1)$ holds and so we prove the inclusion $\tilde{H}^2_0(\Omega_1)\subseteq H^2_0(\Omega_1)$. 

Let $u\in\tilde{H}^2_0(\Omega_1)$, so, in particular $u\in H^1(\R^d)$, $u=0$ a.e. in $\Omega^c_1$ and thus $u\in\tilde{H}^1_0(\Omega_1)$. By \cite[Proposition 4.7]{deve}, since $\Omega$ is a perimeter supersolution, we have that $H^1_0(\Omega_1)=\tilde{H}^1_0(\Omega_1)$ and so $u\in H^1_0(\Omega_1)$. Then, by Proposition \ref{teo:adams9.1.3}, we have $u=0$ everywhere in $\Omega_1^c$ and so, in view of Proposition \ref{teo:evans4.4}, we get $\nabla u=0$ a.e. in $\Omega_1^c$. Moreover, $D_j u\in H^1(\R^d)$ for any $j=1,\ldots, d$, but this implies that $D_j u \in\tilde{H}^1_0(\Omega_1)=H^1_0(\Omega_1)$ and so that $\nabla u=0$ everywhere in $\Omega_1^c$. By Proposition \ref{teo:adams9.1.3} we get that $u\in H^2_0(\Omega_1)$.
\end{proof}

Now we are ready to show that weak minimizers are equivalent to minimizing open sets for Problem \eqref{eq:minbuck}.

\begin{theorem}[existence of an open solution and equivalence with the original problem]\label{teo:equiv}
Problem \eqref{eq:relaxminbuck} admits an open solution. In particular, every solution of Problem \eqref{eq:relaxminbuck} is equivalent to a solution of Problem \eqref{eq:minbuck}, in the sense that if $\Omega\subset\R^d$ is an open set solving Problem \eqref{eq:minbuck}, then it solves also Problem \eqref{eq:relaxminbuck} and, on the other hand, if $\Omega\subset\R^d$ is a set of finite perimeter solving Problem \eqref{eq:relaxminbuck}, then $\Omega_1$ is an open set solving Problem \eqref{eq:minbuck}.
\begin{proof}
The existence of an open solution for Problem \eqref{eq:minbuck} is assured by the fact that, for any solution $\Omega\subset\R^d$ of Problem \eqref{eq:relaxminbuck}, the set $\Omega_1\subset\R^d$ is open (Proposition \ref{pro:omega1}) and admissible for \eqref{eq:relaxminbuck}. Indeed $P(\Omega_1)=P(\Omega)$ since $|\Omega\Delta\Omega_1|=0$; moreover, $\tilde{H}^2_0(\Omega_1)=\tilde{H}^2_0(\Omega)$, so $\tilde{\Lambda}_1(\Omega_1)=\tilde{\Lambda}_1(\Omega)=\inf_{\Omega\in\tilde{\mathcal{A}}_p}\tilde{\Lambda}_1(\Omega)$.

Let us show now the equivalence between Problem \eqref{eq:minbuck} and Problem \eqref{eq:relaxminbuck}. Let $\Omega\subset\R^d$ be a minimizer for Problem \eqref{eq:minbuck}. If it was not a minimizer for Problem \eqref{eq:relaxminbuck}, there would exist a solution $A\in\tilde{\mathcal{A}}_p$ for Problem \eqref{eq:relaxminbuck} such that
$$\tilde{\Lambda}_1(A)<\tilde{\Lambda}_1(\Omega).$$
On the other hand, since $A$ is also a perimeter supersolution, $A_1$ is an open set admissible for Problem \eqref{eq:minbuck} and so
$$\tilde{\Lambda}_1(\Omega)\le\Lambda_1(\Omega)\le\Lambda_1(A_1)=\tilde{\Lambda}_1(A),$$
a contradiction.

On the other hand, if $\Omega\in\tilde{\mathcal{A}}_p$ is a solution for Problem \eqref{eq:relaxminbuck}, then for any open set $A\in\mathcal{A}_p$ one has
$$\Lambda_1(\Omega_1)=\tilde{\Lambda}_1(\Omega)\le\tilde{\Lambda}_1(A)\le\Lambda_1(A),$$
getting the minimality of $\Omega_1$ for Problem \eqref{eq:minbuck}. 
\end{proof}
\end{theorem}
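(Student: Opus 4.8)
The plan is to reduce everything to Theorem~\ref{teo:relax} and to the structural results on perimeter supersolutions. First I would fix a solution $\hat\Omega$ of Problem~\eqref{eq:relaxminbuck}, which exists by Theorem~\ref{teo:relax}. By the propositions above, $\hat\Omega$ is a perimeter supersolution, hence it satisfies an exterior density estimate, so Proposition~\ref{pro:omega1} applies: the set $\hat\Omega_1$ of its density-$1$ points is open, $\tilde H^2_0(\hat\Omega)=H^2_0(\hat\Omega_1)$, and $\tilde\Lambda_1(\hat\Omega)=\tilde\Lambda_1(\hat\Omega_1)=\Lambda_1(\hat\Omega_1)$. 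Since $|\hat\Omega\,\Delta\,\hat\Omega_1|=0$ by the Lebesgue density theorem, one has $|\hat\Omega_1|=|\hat\Omega|<\infty$ and $P(\hat\Omega_1)=P(\hat\Omega)\le p$, so $\hat\Omega_1\in\tilde{\mathcal{A}}_p$ and $\tilde\Lambda_1(\hat\Omega_1)=\inf_{\tilde{\mathcal{A}}_p}\tilde\Lambda_1$; therefore $\hat\Omega_1$ is an open solution of Problem~\eqref{eq:relaxminbuck}, which proves the first assertion.

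For the equivalence I would use only the comparison of objective values across the two admissible classes, relying on the universal bound $\tilde\Lambda_1(A)\le\Lambda_1(A)$ for open $A$ and on the equality $\tilde\Lambda_1(\Omega)=\Lambda_1(\Omega_1)$ of Proposition~\ref{pro:omega1}. In one direction, if $\Omega$ is an open set solving Problem~\eqref{eq:minbuck}, then $\Omega\in\tilde{\mathcal{A}}_p$, so $\tilde\Lambda_1(\Omega)\ge\inf_{\tilde{\mathcal{A}}_p}\tilde\Lambda_1$, while the open solution $\hat\Omega_1$ built above is admissible for Problem~\eqref{eq:minbuck}, so $\Lambda_1(\Omega)\le\Lambda_1(\hat\Omega_1)=\inf_{\tilde{\mathcal{A}}_p}\tilde\Lambda_1$; combining these with $\tilde\Lambda_1(\Omega)\le\Lambda_1(\Omega)$ forces equality throughout, whence $\Omega$ solves Problem~\eqref{eq:relaxminbuck}. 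In the other direction, if $\Omega\in\tilde{\mathcal{A}}_p$ solves Problem~\eqref{eq:relaxminbuck}, then $\Omega_1$ is open, $|\Omega_1|=|\Omega|<\infty$ and $P(\Omega_1)=P(\Omega)\le p$, so $\Omega_1$ is admissible for Problem~\eqref{eq:minbuck}; and for every open admissible $A$ the minimality of $\Omega$ in $\tilde{\mathcal{A}}_p$ gives $\Lambda_1(\Omega_1)=\tilde\Lambda_1(\Omega)\le\tilde\Lambda_1(A)\le\Lambda_1(A)$, so $\Omega_1$ minimizes $\Lambda_1$ among open admissible sets, i.e. it solves Problem~\eqref{eq:minbuck}.

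Since the analytic substance has already been isolated — the concentration-compactness existence (Theorem~\ref{teo:relax}), the exterior density estimate for perimeter supersolutions, and the identification $\tilde H^2_0(\Omega)=H^2_0(\Omega_1)$ (Proposition~\ref{pro:omega1}) — I do not expect a serious obstacle in this statement: the content is a bookkeeping of infima over the open admissible sets and over $\tilde{\mathcal{A}}_p$. The two points that deserve a line of care are that passing from a competitor to its set of density-$1$ points alters neither the Lebesgue measure nor the De Giorgi perimeter (this is where $|\Omega\,\Delta\,\Omega_1|=0$, and hence the invariance of $P$, is used), and that one must invoke the one-sided inequality $\tilde\Lambda_1\le\Lambda_1$ for open sets together with the two-sided equality of Proposition~\ref{pro:omega1} in the right order, so that the two infima coincide and are attained by mutually equivalent sets.
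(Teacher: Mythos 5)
Your proposal is correct and follows essentially the same route as the paper: pass to the density-one set $\Omega_1$ via Proposition \ref{pro:omega1}, use $|\Omega\,\Delta\,\Omega_1|=0$ to preserve measure and perimeter, and compare the two infima through $\tilde\Lambda_1\le\Lambda_1$ and $\Lambda_1(\Omega_1)=\tilde\Lambda_1(\Omega)$. The only cosmetic difference is that you phrase the first direction of the equivalence as a direct chain of inequalities where the paper argues by contradiction; the content is identical.
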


\begin{proof}[Proof of Theorem \ref{teo:maintheorem}]
It is a straightforward consequence of Proposition \ref{pro:exis2}, Theorem \ref{teo:relax}, Proposition \ref{pro:connect} and Theorem \ref{teo:equiv}.
\end{proof}

\section{Existence of optimal shapes for the higher buckling eigenvalues: the case of convex sets}\label{sec:kconv}

The existence of optimal shapes for higher eigenvalues needs a more careful investigation in the framework of sets of finite perimeter. It seems necessary an analysis of the boundedness of minimizers in order to apply an inductive concentration compactness argument as applied for instance in \cite{deve}. At the moment we are not able to get the required boundedness of the optimal sets since for these fourth order problems the common tools of surgery introduced in the $H^1$-setting fail (to get an overview for the Dirichlet-Laplace eigenvalues see, for instance, \cite{deve} for the problem with perimeter constraint or \cite{mazpra} for the problem with volume constraint).

Nevertheless, Theorem \ref{pro:kconv} ensures the existence of minimizers for higher eigenvalues in the framework of convex sets. The variational argument used to prove this result is based on a standard application of the direct methods which is inspired by previous works in which higher eigenvalues for the Laplace operator are minimized among convex sets (see, for instance, \cite{cito} for the case of Robin eigenvalues or \cite{bub} for the Dirichlet case).

In order to apply the direct methods of the Calculus of variations we show that $\Lambda_h$ is lower semicontinuous with respect to the Hausdorff convergence.

\begin{proposition}[Lower semicontinuity of $\Lambda_{h}$ among convex sets]\label{pro:lsc}
Let $(\Omega_n)_n$ be a sequence of open convex sets converging to an open, non empty, convex set $\Omega$ in the Hausdorff topology and let $\Omega_n,\Omega$ be contained in a compact set $D\subset\R^d$. Then, for every $k\in\N$,
$$\Lambda_{h}(\Omega)\le\liminf_{n\to+\infty}\Lambda_{h}(\Omega_n).$$
\end{proposition}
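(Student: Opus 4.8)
The plan is to prove lower semicontinuity of $\Lambda_h$ along an $H$-convergent sequence of convex sets by combining a uniform Sobolev/compactness bound on the eigenfunctions with the min-max characterization. First I would fix a minimizing-type scenario: passing to a subsequence, I may assume $\lim_n \Lambda_h(\Omega_n) = \liminf_n \Lambda_h(\Omega_n) =: \ell < +\infty$ (if the liminf is $+\infty$ there is nothing to prove). For each $n$, pick an $h$-dimensional subspace $V_n \subset H^2_0(\Omega_n)$ realizing (or nearly realizing, up to $1/n$) the min-max value, with an $L^2$-orthonormal basis $u_n^{(1)},\dots,u_n^{(h)}$ of $V_n$ normalized in the gradient norm — concretely, I would take the first $h$ buckling eigenfunctions of $\Omega_n$, orthonormalized so that $\int_{\Omega_n}\nabla u_n^{(i)}\cdot\nabla u_n^{(j)}\,dx = \delta_{ij}$, which is the natural inner product for the buckling problem. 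Then $\int_{\Omega_n}(\Delta u_n^{(i)})^2\,dx = \Lambda_i(\Omega_n) \le \Lambda_h(\Omega_n) \le \ell + 1$ for large $n$, so each $u_n^{(i)}$ is bounded in $H^2_0(D)$ (extending by zero, as in the convention fixed in Section~\ref{sec:prel}).

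Next I would extract weak limits: up to a further subsequence, $u_n^{(i)} \weakto u^{(i)}$ weakly in $H^2_0(D)$ and strongly in $H^1_0(D)$ for each $i=1,\dots,h$. Since $\Omega_n \to \Omega$ in the $H$-topology and all sets are convex, Proposition~\ref{prop:convexcontinuity}(ii) gives $\chi_{\Omega_n}\to\chi_\Omega$ in $L^1$; combined with the fact that each $u_n^{(i)}$ vanishes a.e. outside $\Omega_n$ and converges strongly in $H^1$, one deduces $u^{(i)} = 0$ a.e. on $\Omega^c$. To upgrade this to membership in $H^2_0(\Omega)$ I would invoke Proposition~\ref{teo:adams9.1.3}: it suffices that $u^{(i)}$ and $\nabla u^{(i)}$ vanish everywhere on $\Omega^c$ (or on $\partial\Omega$, since $\Omega$ is open), which follows because convex $\Omega$ is Lipschitz, hence $H^2_0(\Omega) = \tilde H^2_0(\Omega)$, and an a.e.-zero $H^2$ function supported on $\overline\Omega$ with $\Omega$ Lipschitz lies in $H^2_0(\Omega)$. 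Thus $V := \mathrm{span}\{u^{(1)},\dots,u^{(h)}\} \subset H^2_0(\Omega)$.

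The final step is to show $\dim V = h$ and estimate the Rayleigh quotient on $V$. The gradient-orthonormality $\int \nabla u_n^{(i)}\cdot\nabla u_n^{(j)} = \delta_{ij}$ passes to the limit under strong $H^1$ convergence, giving $\int_\Omega \nabla u^{(i)}\cdot\nabla u^{(j)}\,dx = \delta_{ij}$; in particular the $u^{(i)}$ are linearly independent, so $\dim V = h$. For any $v = \sum_i c_i u^{(i)} \in V$, weak $H^2$-lower semicontinuity of $w \mapsto \int(\Delta w)^2$ applied to $v_n := \sum_i c_i u_n^{(i)}$ (which converges weakly in $H^2_0(D)$ to $v$) yields $\int_\Omega(\Delta v)^2\,dx \le \liminf_n \int_{\Omega_n}(\Delta v_n)^2\,dx$, while $\int_{\Omega_n}|\nabla v_n|^2\,dx = |c|^2 \to |c|^2 = \int_\Omega|\nabla v|^2\,dx$; hence, using that $v_n \in V_n$ so its Rayleigh quotient is at most $\Lambda_h(\Omega_n) + 1/n$, a careful bookkeeping (taking a common subsequence for the finitely many extremal directions, or arguing directly via the orthonormal bases) gives
$$
\Lambda_h(\Omega) \le \max_{v \in V\setminus\{0\}} \frac{\int_\Omega(\Delta v)^2\,dx}{\int_\Omega|\nabla v|^2\,dx} \le \liminf_{n\to+\infty}\Lambda_h(\Omega_n).
$$
The main obstacle I anticipate is the justification of $u^{(i)} \in H^2_0(\Omega)$: weak $H^2$ limits of functions supported on shrinking/moving convex sets need the convexity and the $L^1$-convergence of characteristic functions to force the correct boundary behavior, and one must be careful that it is genuinely the second-order Sobolev space (both the function and its gradient must vanish on $\partial\Omega$), which is exactly where Propositions~\ref{teo:evans4.4} and~\ref{teo:adams9.1.3} together with the Lipschitz regularity of convex domains are needed.
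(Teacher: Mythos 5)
Your proposal is correct and follows essentially the same route as the paper: uniform $H^2_0(D)$ bounds on an $H^1_0$-orthonormal basis of an optimal $h$-dimensional subspace, weak $H^2$/strong $H^1$ limits, passage of the gradient-orthonormality to the limit to get an $h$-dimensional competitor space, and lower semicontinuity of the numerator together with convergence of the denominator in the Rayleigh quotient. In fact your justification that the limits belong to $H^2_0(\Omega)$ (via convergence in measure, the Lipschitz regularity of convex sets, and Propositions~\ref{teo:evans4.4} and~\ref{teo:adams9.1.3}) is spelled out more carefully than in the paper, which asserts this step rather tersely.
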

\begin{proof}
Without loss of generality, we can assume $\sup_{n\in\N}\Lambda_{h}(\Omega_n)<+\infty$.

Let $V^n\subset H^2_0(\Omega_n)$ be an admissible $h$-dimensional space for the computation of $\Lambda_{h}(\Omega_n)$ such that
$$\Lambda_{h}(\Omega_n)=\max_{V^n}R_{\Omega_n}.$$
Let $\left\{u_1^n,\ldots,u_h^n\right\}\subset H^2_0(\Omega_n)$ a $H^1_0(\Omega_n)$-orthonormal basis for $V^n$; for every $i=1,\ldots,h$ it holds
$$\int_{\Omega_n}(\Delta u_i^n)^2\:dx=R_{\Omega_n}(u_i^n)\le\max_{V^n}R_{\Omega_n}=\Lambda_{h}(\Omega_n)<C.$$
Then, $\sup_{n}\|u_i^n\|_{H^2_0(\Omega_n)}=\sup_{n}\|u_i^n\|_{H^2_0(D)}<+\infty$ for every $i=1,\ldots,h$. So, for every $i=1,\ldots,h$, there exists $u_i\in H^2_0(D)$ such that $u_i^n\rightharpoonup u_i$ in $H^2_0(D)$. Moreover, $u_i^n\to u_i$ in $H^1(D)$ and $\Omega_n\to\Omega$ also in measure, so $u_1,\ldots,u_h\in H^1(\Omega)$.

Notice that $u_1,\ldots,u_h$ are linearly independent in $H^2_0(\Omega)$, since $\Omega_n$ converges to $\Omega$ also in measure; hence, the linear space $V:=\text{span}\left\{u_1,\ldots,u_h\right\}$ is a competitor for the computation of $\Lambda_{h}(\Omega)$. Let $w=\sum\alpha_iu_i$ realizing the maximum of the Rayleigh quotient $R_\Omega(\cdot)$ on $V$ and let $w_n:=\sum\alpha_iu_i^n\in V^n$. 
Let us observe that the Dirichlet integral at the denominator converges and the numerator is lower semicontinuous and so the Rayleigh quotient is lower semicontinuous as well. Since $w_n\in V^n$, we conclude that
\begin{align*}
\Lambda_{h}(\Omega)&\le\max_{V} R_\Omega=R_\Omega(w)\le\liminf_{n\to+\infty}R_{\Omega_n}(w_n)\le\liminf_{n\to+\infty}\max_{V^n}R_{\Omega_n}\\
&=\liminf_{n\to+\infty}\Lambda_{h}(\Omega_n),
\end{align*}
obtaining the required lower semicontinuity of the buckling eigenvalues.
\end{proof}

Now we are able to prove Theorem \ref{pro:kconv}. In order to apply the direct methods of the Calculus of Variations, we need a compactness property for a minimizing sequence $(\Omega_n)_n$. To this aim, we just need a careful analysis about the non degeneracy and the uniform boundedness of the sequence $(\Omega_n)_n$.

\begin{proof}[Proof of Theorem \ref{pro:kconv}]
Let $(\Omega_n)_n$ be a minimizing sequence of admissible open convex sets for Problem \eqref{eq:kconv} such that $\mathcal{H}^{d-1}(\partial\Omega_n)=p$. Let us show that, up to subsequences, $\Omega_n$ converges in the sense of Hausdorff (and then in measure) to a nonempty admissible open convex set $\Omega$ with $\mathcal{H}^{d-1}(\partial\Omega)=p$. Without loss of generality, up to translations and rotations, we can assume that 
$$\text{diam}(\Omega_n)=\mathcal{H}^1(\Omega_n\cap\{x_2=\ldots=x_d=0\})$$
and that
$$\min_{i=2,\ldots,d}\left(\max_{\Omega_n}x_i-\min_{\Omega_n}x_i\right)=\max_{\Omega_n}x_d-\min_{\Omega_n}x_d$$
i.e. the length of the one dimensional projection of $\Omega_n$ on the axis $x_1$ is equal to the diameter of $\Omega_n$ and the  projection on the axis $x_d$ has minimal length. We claim that $\sup_n\text{diam}(\Omega_n)<+\infty$ and that, up to subsequences,
\begin{equation}\label{eq:width}
\lim_n\left(\max_{\Omega_n}x_d-\min_{\Omega_n}x_d\right)>0.
\end{equation}
We prove \eqref{eq:width} arguing by contradiction. Let us suppose that the limit in \eqref{eq:width} is zero; in view of John's Ellipsoid Theorem \ref{teo:john}, there exists an ellipsoid $E_n$ such that, up to rotations and translations
$$E_n\subseteq\Omega_n\subseteq d E_n.$$
Then, since also the width of $E_n$ vanishes, we have
$$\Lambda_h(\Omega_n)\ge\Lambda_h(d E_n)=\frac{1}{d^2}\Lambda_h(E_n)\ge\frac{1}{d^2}\Lambda_1(E_n)\ge\frac{1}{d^2}\lambda_2(E_n)\to+\infty,$$
against the minimality of $\Omega_n$. Then \eqref{eq:width} holds.

To prove that the diameters of the $\Omega_n$ sets are uniformly bounded, we argue again by contradiction. Let us suppose that the sequence of the diameters is unbounded. Since the sets $\Omega_n$ are convex and uniformly bounded in measure, the product
$$\prod_{j=1}^d\left(\max_{\Omega_n}x_j-\min_{\Omega_n}x_j\right)$$
has to be uniformly bounded. In view of our assumptions, as the diameter of $\Omega_n$ tends to infinity, necessarily the first term of the product diverges. We deduce that at least the smallest term among the remaining $d-1$ terms has to vanish. In other words, we have
$$\lim_n\left(\max_{\Omega_n}x_d-\min_{\Omega_n}x_d\right)=0,$$
in contradiction with \eqref{eq:width}.

Then $(\Omega_n)_n$ is an equibounded sequence of convex sets. In view of Proposition \ref{prop:convexcontinuity}(v), $(\Omega_n)_n$ converges (up to subsequences) in the sense of Hausdorff to a bounded convex set $\Omega$; moreover, by Proposition \ref{prop:convexcontinuity}(ii), the convergence is also in measure. In addition, thanks to \eqref{eq:width}, the limit set $\Omega$ is not degenerate (i.e. it has positive measure) and
$$\mathcal{H}^{d-1}(\partial\Omega)=\lim_{n\to+\infty}\mathcal{H}^{d-1}(\partial\Omega_n)=p.$$
In view of Proposition \ref{pro:lsc} $\Omega$ is the required minimizer.
\end{proof}

We point out that in dimension $d=2$ Theorem \ref{pro:kconv} proves the existence of open minimizers for $\Lambda_h$ in the whole of $\R^d$. Indeed, the problem
$$\min\left\{\Lambda_h(\Omega):\Omega\subset\R^2\ \text{open}, |\Omega|<\infty,\ P(\Omega)\le p\right\}$$
reduces to the minimization problem among convex sets with prescribed boundary length $p$.  

Moreover, if the perimeter constraint in Problem \eqref{eq:kconv} is replaced by a volume constraint, the existence of optimal shapes for problem
$$\min\left\{\Lambda_h(\Omega):\Omega\subset\R^d\ \text{open and convex}, |\Omega|\le m\right\},$$
can be proved by using the same arguments in this section.

\section{Further remarks and open problems}\label{sec:open}

Once proved the existence of minimizers, some interesting questions arise about the regularity or the precise shape of the minimizers. For the first eigenvalue the two questions are related, as highlighted in \cite{AshBuc2003} for the planar case with measure constraint: provided that an optimal shape is regular enough, it must coincide with the disk of given measure. In our framework we start from a better situation, since optimal planar sets are convex. In this case (and, more generally, in the case of Problem \eqref{eq:kconv}) it seems necessary at least to remove the possible corners to get more regularity of the boundary. Unfortunately, at the moment the cutting techniques that are known in the $H^1$-setting (see, for instance, \cite{buconvex} for the Dirichlet-Laplace eigenvalues, or \cite{cito} for the Robin-Laplace eigenvalues) do not seem to apply since they are based on surgery arguments that are not available in the $H^2$-setting.

Another aspect which is worth to analyze is the regularity of the optimal sets in higher dimension. Due to the choice of the perimeter constraint, it would be interesting to understand if it was possible to see minimizers for Problem \eqref{eq:minbuck} as quasi-minimizers of the perimeter in the sense of De Giorgi, as done in \cite{deve}, in order to obtain that optimal open sets have $C^{1,\alpha}$ boundary up to a singular set whose dimension is less then or equal to $d-7$. To this aim, it seems necessary to prove that optimal sets are bounded. Unfortunately, as highlighted at the beginning of Section \ref{sec:kconv}, up to our knowledge, there are no available techniques to reach this goal at the moment. 

We conclude giving the following list of some open problems.

\begin{open.problem}
Are optimal shapes for Problem \eqref{eq:minbuck} smooth? Is it possible to remove the corners from the boundary of the convex minimizers in the planar case?
\end{open.problem}
\begin{open.problem}
Provided that an optimal shape for Problem \eqref{eq:minbuck} is smooth enough, can we prove that it is a ball, at least in the planar case?
\end{open.problem}
\begin{open.problem}
Are optimal shapes for Problem \eqref{eq:minbuck} bounded also in higher dimension?
\end{open.problem}
\begin{open.problem}
Do minimizers for $\Lambda_h$ exist among open sets with prescribed perimeter in higher dimension? Are they bounded?
\end{open.problem}

\bigskip

\paragraph*{\bf Declarations} S.C. has been partially supported by the ACROSS project Cod. ARS01-00702. A.L. has been partially supported by the Italian M.U.R. PRIN: grant number 2017KC8WMB.
The authors have no competing interests to declare that are relevant to the content of this article.

\bigskip

\begin{bibdiv}
	\begin{biblist}

	\bib{adams1999function}{book}{
  title={Function spaces and potential theory},
  author={Adams, D.R.}
  author={Hedberg, L. I.},
  volume={314},
  year={1999},
  publisher={Springer Science \& Business Media}
}

		\bib{AFP}{book}{
			author={Ambrosio, L.},
			author={Fusco, N.}
			author={Pallara, D.},
			title={Functions of bounded variation and free discontinuity problems},
			series={Oxford Mathematical Monographs},
			publisher={The Clarendon Press, Oxford University Press, New York},
			date={2000},
			pages={xviii+434},
		}

\bib{AshBuc2003}{article}{
  title={On the isoperimetric inequality for the buckling of a clamped plate},
  author={Ashbaugh, S.},
	author={Bucur, D.},
  journal={Zeitschrift f{\"u}r angewandte Mathematik und Physik ZAMP},
  volume={54},
  number={5},
  pages={756--770},
  year={2003},
  publisher={Springer}
}

\bib{buconvex}{article}{
  title={Regularity of optimal convex shapes},
  author={Bucur, D.},
  journal={Journal of Convex Analysis},
  volume={10},
  number={2},
  pages={501--516},
  year={2003},
  publisher={HELDERMANN VERLAG LANGER GRABEN 17, 32657 LEMGO, GERMANY}
}

\bib{bub}{book}{
title={Variational Methods in Shape Optimization Problems},
author={Bucur, D.},
author={Buttazzo, G.},
year={2005},
publisher={Birkhauser}
}

\bib{bbh}{article}{,
  title={Minimization of $\lambda_2(\Omega)$ with a perimeter constraint},
  author={Bucur, D.}
  author={Buttazzo, G.}
  author={Henrot, A.},
  journal={Indiana University mathematics journal},
  pages={2709--2728},
  year={2009},
  publisher={JSTOR}
}

\bib{bucvar}{article}{
  title={Global minimizing domains for the first eigenvalue of an elliptic operator with non-constant coefficients},
  author={Bucur, D.}
  author={Varchon, N.},
  journal={Electronic Journal of Differential Equations},
  volume={36},
  year={2000},
  pages={1--10},
  publisher={Texas State University, Department of Mathematics}
}

\bib{cito}{article}{
  title={Existence and regularity of optimal convex shapes for functionals involving the robin eigenvalues},
  author={Cito, S.},
  journal={J. Convex Anal},
  volume={26},
  pages={925--943},
  year={2019}
}

\bib{deve}{article}{
  title={Existence and regularity of minimizers for some spectral functionals with perimeter constraint},
  author={De Philippis, G.}
	author={Velichkov, B.},
  journal={Applied Mathematics \& Optimization},
  volume={69},
  number={2},
  pages={199--231},
  year={2014},
  publisher={Springer}
}

		
\bib{evans2018measure}{book}{
  title={Measure theory and fine properties of functions - Revised Edition},
  author={Evans, L. C.}
  author={Gariepy, R. F.},
  year={2015},
  publisher={CRC Press, Boca Raton, FL}
  series = {Textbooks in Mathematics},
  pages = {xiv+299},
  ISBN = {978-1-4822-4238-6},
}

\bib{GGS}{book}{
  title={Polyharmonic boundary value problems: positivity preserving and nonlinear higher order elliptic equations in bounded domains},
  author={Gazzola, F.},
	author={Grunau, H.-C.},
	author={Sweers, G.},
  year={2010},
  publisher={Springer Science \& Business Media}
}	

\bibitem{joh}
F. John:
Extremum problems with inequalities as subsidiary conditions, in \emph{Studies and Essays Presented to R. Courant on his 60th Birthday, January 8, 1948}, Interscience Publishers, Inc., New York, N. Y., pp. 187-204 (1948)

\bib{lions84}{inproceedings}{
  title={The concentration-compactness principle in the Calculus of Variations. The locally compact case, part 1.},
  author={Lions, P.-L.},
  booktitle={Annales de l'Institut Henri Poincaré (C) Non Linear Analysis},
  volume={1},
  number={2},
  pages={109--145},
  year={1984},
  organization={Elsevier}
}

\bib{mazpra}{article}{
  title={Existence of minimizers for spectral problems},
  author={Mazzoleni, D.}
  author={Pratelli, A.},
  journal={Journal de Math{\'e}matiques Pures et Appliqu{\'e}es},
  volume={100},
  number={3},
  pages={433--453},
  year={2013},
  publisher={Elsevier}
}
		
\bib{sto16}{article}{
  title={Optimal shape of a domain which minimizes the first buckling eigenvalue},
  author={Stollenwerk, K.},
  journal={Calculus of Variations and Partial Differential Equations},
  volume={55},
  number={1},
  pages={5},
  year={2016},
  publisher={Springer}
}		
		
	
	\bib{sto2021}{article}{
  title={On the optimal domain for minimizing the buckling load of a clamped plate},
  author={Stollenwerk, K.},
  journal={arXiv preprint arXiv:2110.02545},
  year={2021}
}

	\end{biblist}
\end{bibdiv}
\end{document}